\title{Chain duality for categories over complexes}
\date{}
\newcommand{\cC}{{\mathcal{C}}}
\newcommand{\cF}{{\mathcal{F}}}
\newcommand{\bfx}{{\mathbf x}}
\newcommand{\A}{{\mathbb A}}
\newcommand{\R}{{\mathbb R}}
\newcommand{\Z}{{\mathbb Z}}
\newcommand{\ol}[1]{{\overline{#1}}}
\newcommand{\ul}[1]{{\underline{#1}}}
\newcommand{\wh}[1]{{\widehat{#1}}}
\DeclareMathOperator{\ch}{{Ch}}
\DeclareMathOperator{\cone}{{Cone}}
\DeclareMathOperator{\ev}{{ev}}
\DeclareMathOperator{\Hom}{{Hom}}
\DeclareMathOperator{\id}{Id}
\DeclareMathOperator{\Mod}{-mod}
\DeclareMathOperator{\mor}{Mor}
\DeclareMathOperator{\op}{{op}}
\DeclareMathOperator{\sw}{{sw}}
\DeclareMathOperator{\tot}{{Tot}}
\DeclareMathOperator{\YL}{{YL}}
\DeclareMathOperator*{\colim}{{colim}}
\newtheorem{theorem}{Theorem}
\newtheorem{proposition}[theorem]{Proposition}
\newtheorem{corollary}[theorem]{Corollary}
\newtheorem{lemma}[theorem]{Lemma}
\theoremstyle{definition}
\newtheorem{definition}[theorem]{Definition}
\newtheorem{remark}[theorem]{Remark}
\newtheorem{example}[theorem]{Example}
\author{James F. Davis,  Carmen Rovi}
\newcommand{\Addresses}{{
\bigskip
\footnotesize

James F.~Davis,  \textsc{Department of Mathematics, Indiana University, Rawles Hall, 831 East 3rd St,
Bloomington, IN 47405} \par \nopagebreak
\textit{E-mail address}, J.F.~Davis:  \texttt{jfdavis@indiana.edu}

\medskip

Carmen Rovi,  \textsc{Department of Mathematics \& Statistics, Loyola University Chicago, 1032 W. Sheridan Rd.,
Chicago, IL 60660} \par \nopagebreak
\textit{E-mail address}, C.~Rovi:  \texttt{crovi@luc.edu}

}}
\begin{document}
\maketitle

\begin{center}
\textit{Dedicated to Dennis Sullivan on the occasion of his 80th birthday.}
\end{center}

\begin{abstract}

We show that the additive category of chain complexes parametrized by a finite simplicial complex $K$ forms a category with chain duality.   This fact, never fully proven in the original reference (Ranicki, 1992), is fundamental for Ranicki's algebraic formulation of the surgery exact sequence of Sullivan and Wall, and his interpretation of the surgery obstruction map as the passage from local Poincar\'e duality to global Poincar\'e duality.  

Our paper also gives a new, conceptual, and geometric treatment of chain duality on $K$-based chain complexes.

\end{abstract}

{\bf Keywords: } Chain duality, assembly, controlled surgery, $L$-theory, dual cell decomposition, $K$-dissection.

\section{Introduction}

Kervaire and Milnor \cite{KM} developed and applied the new field of surgery to classify exotic smooth structures on spheres.   Browder and Novikov independently extended and relativized the theory.  Sullivan in his thesis \cite{Sullivan-thesis}  investigated the obstruction theory for  deforming a homotopy equivalence to a homeomorphism.   In seminar notes \cite{Sullivan-Notes}    written shortly after his thesis,  Sullivan's Theorem 3 packaged  this in what is now called the surgery exact sequence.  (We will be ahistorical and concentrate on topological manifolds; Kervaire-Milnor concentrated on smooth manifolds and Sullivan on PL-manifolds.   The extension to topological manifolds is due to the deep work of Kirby and Siebenmann \cite{KS}.). It was extended to the nonsimply-connected case and to the case of compact manifolds by Wall \cite{Wall}.   The surgery exact sequence for a closed $n$-dimensional manifold $X$ with $n \geq 5$ is
$$
\dots \to L_{n+1} (\Z[\pi_1(X)]) \to   \mathbb{S}^{TOP}(X) \to{\cal N}^{TOP}(X)  \xrightarrow{\sigma}  L_{n} (\Z[\pi_1X]). 
$$
The object one wants to compute is the structure set $\mathbb{S}^{TOP}(X)$, first defined by Sullivan.   Representatives of the structure set are given by (simple) homotopy equivalences from a closed $n$-manifold to $X$.  Computing the structure set is the key ingredient in computing the manifold moduli set, the set of homeomorphism types of  $n$-manifolds homotopy equivalent to $X$.   The beauty of the surgery exact sequence is marred by many flaws.   One is that it is an exact sequence of pointed sets.   Another flaw is standard  with a long exact sequence, to do computations one needs to compute the surgery obstruction map $\sigma$, including its domain, the normal invariants ${\cal N}^{TOP}(X)$, and its codomain, the $L$-groups.    For many fundamental groups (e.g.~finite groups or finitely generated abelian groups) one can compute the $L$-groups algebraically.   Sullivan, in his thesis, analyzed the normal invariants, using transversality to establish a bijection  ${\cal N}^{TOP}(X)   \cong [X,G/TOP]$.  He also computed the homotopy groups $\pi_i(G/TOP)$ which vanish for $i$ odd, have order 2 for $i \equiv 2 \pmod 4$, and are infinite cyclic for $i \equiv 0 \pmod 4$ (this follows from the generalized Poincar\'e conjecture).   Furthermore, Sullivan analyzed the homotopy type of $G/TOP$ and established Sullivan periodicity, $\Omega^4 (\Z \times G/TOP) \simeq \Z \times G/TOP$.  This then determines an $\Omega$-spectrum $\mathbb{L}_.$ and its 1-connective cover $\mathbb{L}_.\langle 1 \rangle$.   There is a formal identification  $[X,G/TOP] = H^0(X; \mathbb{L}_.\langle 1 \rangle)$.   

The surgery exact sequence is now becoming more presentable, but it is still marred by functoriality issues: the $L$-groups are covariant in $X$, the normal invariants are contravariant in $X$, and the structure set has no obvious variance at all.    Furthermore it is only defined for manifolds, and one would like an abelian group structure. These flaws make computing the surgery obstruction map difficult.     Quinn's \cite{Quinn} vision (largely carried out by Ranicki \cite{bluebook}, see also \cite{KMM}) is to find a bijection between the surgery exact sequence and a long exact sequences of abelian groups defined for every space $X$ and fully covariant in $X$.
In more detail, there is the following commutative diagram

$$
\begin{tikzcd}
\arrow[r]  & L_{n+1} (\Z[\pi_1X]) \arrow[r] \arrow[d, "="] &  \mathbb{S}^{TOP}(X) \arrow[r] \arrow[d, "\cong"] & {\cal N}^{TOP}(X)  \arrow[r] \arrow[d, "\cong"] & L_{n} (\Z[\pi_1X]) \arrow[d, "="] \\
\arrow[r] & L_{n+1} (\Z[\pi_1X]) \arrow[r] \arrow[d, "="]&   \mathbb{S}^{\langle 1 \rangle}_{n+1}(X) \arrow[r] \arrow[d] & H_n(X; \mathbb{L}_.\langle 1 \rangle) \arrow[r,"A\langle 1 \rangle" ] \arrow[d] & L_{n} (\Z[\pi_1X]) \arrow[d, "="] \\
\arrow[r] & L_{n+1} (\Z[\pi_1X]) \arrow[r] &   \mathbb{S}_{n+1}(X) \arrow[r] & H_n(X; \mathbb{L}_.) \arrow[r, "A"]  & L_{n} (\Z[\pi_1X]) \\
\end{tikzcd}
$$
where the vertical maps labelled $\cong$ are bijections when $X$ is a closed $n$-manifold and the bottom two horizontal lines are exact sequences of abelian groups, defined for any space $X$. These two lines are called the 1-connective algebraic surgery exact sequence and the algebraic surgery exact sequence, respectively.  The maps $A\langle 1 \rangle$ and $A$ are called assembly maps; they are defined at the spectrum level. Hence there is a long exact sequence
of homotopy groups, where the algebraic structure groups are defined be to the homotopy groups of the cofiber of the assembly maps.    The map $A$ is conjectured to be an isomorphism when $X = B\pi$ with $\pi$ torsionfree. 

There are  myriad ways of constructing the assembly maps (the construction in \cite{DL98} seems best for computations).  Different constructions are identified via axiomatics (see \cite{WW}, also \cite{DL98}).   Ranicki's version of assembly, needed for his approach to the above diagram, was motivated by his earlier work with Weiss \cite{RW} viewing the assembly maps as a passage from local to global Poincar\'e duality.     Much earlier Ranicki \cite{ats1} reinterpreted Wall's algebraic $L$-groups as bordism groups of algebraic Poincar\'e complexes over the group ring $\Z[\pi_1X]$.  This is the global Poincar\'e duality.  The local Poincar\'e duality comes from making a geometric degree one normal map transverse to the dual cones of $X$ (see Section \ref{sec:K-based} for the definition).   These degree one normal maps to the cones  are then assembled to give the original degree one normal map.

More precisely, Ranicki \cite{bluebook} defined the notion of an additive category with chain duality $\A$, the associated algebraic bordism category $\Lambda(\A)$ (see his Example 3.3), and the corresponding $L$-groups $L_n(\A)$ (see his Definition 1.8).  In his notation, the assembly map is given by establishing a map of algebraic bordism categories (see his Proposition 9.11)
$$
\Lambda((\Z,X)\Mod)) \to \Lambda(\Z[\pi_1X]\Mod)
$$
and defining the assembly map to be the induced map on $L$-groups.  However, one flaw in his argument is that he never provided a proof that $(\Z,X)\Mod$ is an additive category with chain duality, despite his assertion in Proposition 5.1 of \cite{bluebook}.   Our modest contribution to this saga is to provide a self-contained, conceptual, and geometric proof that $(\Z,X)\Mod$ is an additive category with chain duality.   

We are not the first to provide a proof of this result -- one is given in Section 5 of \cite{Spiros-Tibor}.   However, we found the proof and its notation rather dense.    Another account of this result is given in a recent preprint of Frank Connolly \cite{Frank}.   Although his aims are quite similar to ours, the approach is  different, the reader may wish to compare.   

We now outline our paper. In Section \ref{sec:CD} we review Ranicki's notion of an additive category with chain duality, this is an additive category with a chain duality functor satisfying a chain homotopy equivalence condition.   In Section \ref{sec:K-based} we fix a finite simplicial complex $K$ (e.g. a triangulation of a compact manifold), and we define Ranicki's additive categories of $K$-based chain complexes.   Here we need to warn the reader that we have deviated from Ranicki's notation in \cite{bluebook}, which we found difficult to use.   A comparison between our notation and Ranicki's is given in Remark \ref{R_notation}.  The two key additive categories are $\ch(\Z(K)\Mod)$ and $\ch(\Z(K^{\op})\Mod)$.   The latter category is the one whose $L$-theory gives the normal invariants, so is perhaps more important.   The simplicial chain complex $\Delta K$ gives an object of  $\ch(\Z(K)\Mod)$ and the simplicial cochain complex $\Delta K^{-*}$ gives an object of $\ch(\Z(K^{\op})\Mod)$.  More generally, given a CW-complex $X$ with a $K$-dissection, the cellular chains $C(X)$ give an object of $\ch(\Z(K)\Mod)$ and given a CW-complex $X$ with a $K^{\op}$-dissection, the cellular chains $C(X)$ give an object of $\ch(\Z(K^{\op})\Mod)$.    We related this to dual cell decompositions, defined even when $K$ is not a manifold.  In Section \ref{sec:cat_point_of_view}, we develop homological algebra necessary for our proof that these categories admit a chain duality.

Section \ref{sec:dual_cell} may be of independent interest.    For a finite simplicial complex $K$, we define the dual cell decomposition $DK$ which is a regular CW-complex refining the simplicial structure on $K$.  Corollary \ref{varepsilon is a weak eq} and Remark \ref{two-sided} say that this, in some sense, gives a two-sided bar resolution for the category of posets of $K$. 

Finally, in Section \ref{K-based chain duality} we define chain duality functors $(T,  \tau)$ on $\ch(\Z(K^{\op})\Mod)$ and $\ch(\Z(K)\Mod)$ and prove our main theorem.

\begin{theorem} \label{main}
 The following are  additive categories with  chain duality
$$\begin{cases}  (\ch(\Z(K^{\op})\Mod),T, \tau), \\ (\ch(\Z(K)\Mod),T, \tau).   \end{cases}$$
\end{theorem}

\section{Chain Duality} \label{sec:CD} 

For a category $K$, write $\sigma \in K$ when $\sigma$ is an object of $K$ and $K(\sigma,\tau)$ for the set of morphisms from $\sigma$ to $\tau$.   A {\em preadditive category} is a category where all morphism sets are abelian groups and composition is bilinear.    An {\em additive category} is a preadditive category which admits finite products and coproducts.   An example of an additive category is the category of finitely generated free abelian groups.   

Let $\A$ be an additive category and let $\ch(\A)$ be the category of finite chain complexes over $\A$ where {\em finite} means that $C_n = 0$ for all but a finite number of $n$.   Homotopy notions make sense in this category:  the notions of two 
chain
 maps being chain homotopic, a 
chain map being a chain homotopy equivalence, two chain complexes being chain homotopy equivalent, and a chain complex being contractible. 
The notion of homology of a chain complex over an additive category does not make sense.

Let $\ch_{\bullet,\bullet}(\A)$ be the category of finite bigraded chain complexes over $\A$.   There are functors
\begin{align*}
\tot & : \ch_{\bullet,\bullet}(\A) \to \ch(\A) \\
\Hom_{\bullet,\bullet} & : \ch(\A)^{\op} \times  \ch(\A) \to \ch_{\bullet,\bullet}(\Z\Mod)
\end{align*}
where $\tot(C_{\bullet,\bullet})_n = \bigoplus_{p+q = n}  C_{p+q}$ and 
$\Hom(C,D)_{p,q} = \A(C_{-p},D_q)$.  (Throughout this paper, if the differentials are standard or can be easily determined, we omit them for readability).
   If $C$ and $D$ are finite chain complexes over an additive category $\A$, then
$$
\Hom_{\A}(C,D) := \tot(\Hom_{\bullet,\bullet}(C,D))
$$
is a chain complex of abelian groups with differentials
\begin{align*}
d_{\Hom_{\A}(C,D)} :  \Hom_{\A}(C,D)_n & \to  \Hom_{\A}(C,D)_{n-1} \\
d (f) & =  d_D \circ f + (-1)^{n+1} f \circ d_C .
\end{align*}

A 0-cycle is a chain map; the difference of chain maps is a boundary if and only if the chain maps are chain homotopic.   In particular, there is a monomorphism of abelian groups $\ch(\A)(C, D) \to \Hom_{\A}(C,D)_0$.

If $C$ and $D$ are chain complexes of abelian groups, then there is a chain complex $(C \otimes D, d_{\otimes})$ with differentials 
\begin{align*}
d_{\otimes} : (C \otimes D)_n & \to (C \otimes D)_{n-1} \\
d_{\otimes} ( x \otimes y)  & = d_C (x) \otimes y + (-1)^{|x|}  x \otimes d_D(y).
\end{align*}
\begin{definition}

A {\em chain duality functor} $(T, \tau)$ on an additive category $\A$ is an additive functor $T: \ch(\A) \to \ch(\A)^{\op}$ together with a natural chain map
$$
\tau_{C,D}: \Hom_{\A}(TC,D) \to \Hom_{\A}(TD,C)
$$
defined for each pair of chain complexes $C,D \in \ch(\A)$ so that $\tau^2 = \id$ in the sense that $\tau_{D,C} \circ  \tau_{C,D} = \id$.
\end{definition}

\begin{remark}
By restricting to 0-cycles, the natural chain map $\tau$ induces a natural isomorphism of abelian groups
$$
\tau_{C,D} : \ch(\A)(TC,D) \to \ch(\A)(TD,C).
$$
\end{remark}

\begin{lemma}  \label{ec_appears}
Let $(T, \tau)$ be a chain duality functor on $\A$.  For $C \in \ch(\A)$, let $e_C : T^2C \to C$ be $\tau(\id_{TC})$.   This defines a natural transformation 
$$
e: T^2 \to \id : \ch(\A) \to \ch(\A) 
$$
so that for each object $C \in \ch(\A)$,
$$
e_{TC} \circ T(e_C) = \id_{TC}: TC \to T^3C \to TC.
$$
\end{lemma}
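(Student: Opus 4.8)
The plan is to deduce everything formally from the two defining properties of $(T,\tau)$: naturality of $\tau_{C,D}$ in each of the variables $C$ and $D$, and the involution identity $\tau_{D,C}\circ\tau_{C,D}=\id$. As a preliminary, note that since $T\colon\ch(\A)\to\ch(\A)^{\op}$ is contravariant, $C\mapsto T^2C$ (and, more generally, each even power) is a covariant endofunctor of $\ch(\A)$, and $e_C$ is, by definition, $\tau_{C,TC}(\id_{TC})\in\Hom_\A(T^2C,C)_0$; since $\tau$ is a chain map and $\id_{TC}$ is a $0$-cycle, $e_C$ is a $0$-cycle, i.e.\ an honest chain map $T^2C\to C$.

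For naturality, fix $f\colon C\to C'$ and evaluate both composites $f\circ e_C$ and $e_{C'}\circ (T^2f)$ as elements of $\Hom_\A(T^2C,C')_0$. Applying naturality of $\tau$ in the first variable to $f$ (with second variable frozen at $TC$) and evaluating on $\id_{TC}$ rewrites $f_*\,\tau_{C,TC}(\id_{TC})$ as $\tau_{C',TC}\big((Tf)^*\id_{TC}\big)=\tau_{C',TC}(Tf)$. Applying naturality of $\tau$ in the second variable to the morphism $Tf\colon TC'\to TC$ (with first variable frozen at $C'$) and evaluating on $\id_{TC'}$ rewrites $(T^2f)^*\,\tau_{C',TC'}(\id_{TC'})$ as $\tau_{C',TC}\big((Tf)_*\id_{TC'}\big)=\tau_{C',TC}(Tf)$. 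The two sides agree, so the naturality square for $e$ commutes.

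For the triangle identity, observe that $T(e_C)\colon TC\to T^3C$ and $e_{TC}=\tau_{TC,T^2C}(\id_{T^2C})\colon T^3C\to TC$, so that $e_{TC}\circ T(e_C)=(T(e_C))^*\,\tau_{TC,T^2C}(\id_{T^2C})$. Naturality of $\tau$ in the second variable, applied now to the morphism $e_C\colon T^2C\to C$ (with first variable frozen at $TC$) and evaluated on $\id_{T^2C}$, turns this into $\tau_{TC,C}\big((e_C)_*\id_{T^2C}\big)=\tau_{TC,C}(e_C)=\tau_{TC,C}\big(\tau_{C,TC}(\id_{TC})\big)$. Finally, the involution identity $\tau^2=\id$ applied to the pair $(C,TC)$ gives $\tau_{TC,C}\circ\tau_{C,TC}=\id$ on $\Hom_\A(TC,TC)$, whence $e_{TC}\circ T(e_C)=\id_{TC}$.

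The argument has no conceptual content beyond the formalism of a duality whose square is strictly the identity; it is the standard derivation of the natural transformation $T^2\to\id$ together with one of its triangle identities. The only thing requiring care is the bookkeeping: keeping straight which instance $\tau_{?,?}$ is in play, in which variable naturality is being invoked, and whether a given map is pre- or post-composed (a convenient self-check is that the two independent computations in the naturality step must both produce $\tau_{C',TC}(Tf)$). I therefore expect the writing, not the mathematics, to be the main obstacle.
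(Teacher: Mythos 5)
Your proof is correct and follows essentially the same route as the paper: the triangle identity is obtained by applying naturality of $\tau$ in the second variable (the paper's identity $\tau(\beta\circ\alpha)=\tau(\alpha)\circ T(\beta)$) to $e_C$ and then invoking $\tau^2=\id$. You are in fact slightly more complete, since you also verify the naturality of $e\colon T^2\to\id$ and that $e_C$ is a genuine chain map, points the paper asserts without explicit proof.
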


\begin{proof}
Suppose $\alpha : TU \to V$ and $\beta : V \to W$ are chain maps.    Then naturality of $\tau$ implies that 
$$
\tau(\beta \circ \alpha) = \tau(\alpha) \circ T(\beta).
$$
Thus
\begin{align*}
e_{TC} \circ T(e_C) & = \tau(\id_{T^2C}) \circ T(e_C)\\
& = \tau(e_C \circ \id_{T^2C}) \\
& = \tau(e_C) \\
& = \id_{TC}.
\end{align*}
\end{proof}

It is also true, conversely, that an additive functor $T: \ch(\A) \to \ch(\A) ^{\op}$ and natural transformation $e: T^2 \to \id$ satisfying 
$
e_{TC} \circ T(e_C) = \id_{TC}
$
for all $C \in \ch(\A)$ determines a chain duality functor $(T,\tau)$ where $\tau(f : TC \to D) := e_C \circ T(f)$, but we omit the proof of this fact.

\begin{definition}
 A {\em  chain duality} on an additive category $\A$ is a chain duality functor $(T, \tau)$ so that $e_C : T^2C \to C$ is a chain homotopy equivalence for all $C \in \ch(\A)$.
\end{definition}

This is equivalent to
 the definition in  Andrew Ranicki's book \cite[ Definition 1.1]{bluebook}.   
Notice that a chain duality functor does not necessarily give a chain duality, because of the extra condition that $e_C$ is a chain homotopy equivalence.
 We separately defined  a chain duality functor because there can be uses for the weaker notion, for example, see the thesis of Christopher Palmer \cite{P15}.
 
\section{$K$-based chain complexes} \label{sec:K-based} 

Let $K$ be a finite set.   

\begin{definition}
 An abelian group $M$ is {\em $K$-based} if it is expressed as a direct sum
 $$
 M = \bigoplus_{\sigma \in K} M(\sigma).
 $$
 A morphism $f : M \to N$ of $K$-based abelian groups is simply a homomorphism of the underlying abelian groups $M$ and $N$.   Equivalently, it is a collection of homomorphisms $\{ M(\sigma) \to N(\tau)) \mid  \sigma, \tau \in K \}$.
\end{definition}

In our exposition, we choose to work with $M$ being an abelian group.   However, everything we say (and everything Ranicki says in \cite{bluebook}) generalizes to the context of $R$-modules where $R$ is a ring with involution.

When the set $K$ is a finite poset, we are interested in a subcategory of the $K$-based abelian groups.

\begin{definition}  \label{finite poset}
Let $K$ be a finite poset.

 The objects of $\Z(K)\Mod$ are the $K$-based 
abelian groups
 $$M = \oplus_{\sigma \in K} M(\sigma)$$
  where $M(\sigma)$ is a finitely generated free abelian group for each $\sigma \in K$.   A $K$-based morphism $f: M \to N$ is a morphism in $\Z(K)\Mod$ if, for all $\tau \in K$,
$$
f(M(\tau)) \subset \bigoplus_{\sigma \leq \tau} N(\sigma).
$$
\end{definition}

The slogan for morphisms is ``bigger to smaller."   

Let $K$ be a finite simplicial complex.    There is an associated poset, also called $K$, whose objects are the simplices of $K$ and whose morphisms are inclusions:   $\sigma \leq \tau$ means $\sigma \subseteq \tau$.   Our quintessential examples of a poset will be either $K$ or $K^{\op}$.  Our convention will be that $\sigma \leq \tau$ means $\sigma \leq_K \tau$ and we will try and minimize the use of $\tau \leq_{K^{\op}} \sigma$.
 The simplicial chain complex $\Delta(K) \in \ch(\Z(K)\Mod)$ illustrates the bigger-to-smaller slogan.   Here $\Delta(K)_n = \oplus_{\sigma \in K^n} \Delta(K)_n(\sigma)$, with
 $$ \Delta(K)_n(\sigma) \cong \begin{cases} \Z & \textnormal{if } n = |\sigma| \\0 & \textnormal{otherwise} \end{cases} $$

Since duality is the fundamental feature of this paper, we introduce it immediately.

\begin{definition}  Let $K$ be a finite poset.
 The duality functor 
 $$
 * : \Z(K)\Mod \to (\Z(K^{\op})\Mod)^{\op}
 $$
 is defined on objects by
 $$
 M^* = \bigoplus_{\sigma \in K} M(\sigma)^*
 $$
 where $M(\sigma)^* = \Hom_\Z(M(\sigma),\Z)$.  There is a natural isomorphism \\ $E : \id \Rightarrow **$ with $E_M : M \to M^{**}$ induced by $E_M(\sigma) : M(\sigma) \to M(\sigma)^{**}$ given by $m \mapsto (\phi \mapsto \phi(m))$.  The duality functor and natural isomorphism extend to chain complexes
 $$
 -* : \ch(\Z(K)\Mod) \to \ch(\Z(K^{\op})\Mod)^{\op}
 $$
 with $(C^{-*})_n = C^{-n} := (C_n)^*$.
\end{definition}

This definition illustrates some of our notational conventions.   
    We write $C$ (and not $C_*$) to denote a chain complex.  We use $C^{-*}$ (and not $C^*$) so that the dual is also a chain complex, whose differential has degree minus one.  
 There are also sign conventions on the differential; we follow the sign conventions of Dold \cite{Dold}:  the differential $(C_{-*})_{n+1} \to (C_{-*})_{n}$ is given by $(-1)^{n+1} (\partial_{-n})^*$.

  The simplicial cochain complex $\Delta K^{-*} \in \ch(\Z(K^{\op})\Mod)$ of a finite  simplicial complex illustrates the $K^{\op}$-slogan  ``smaller-to-bigger."

\begin{definition} \label{def_dissection}
Let $K$ be a finite simplicial complex and let $X$ be a finite CW-complex.
\begin{enumerate}
 
\item A $K$-dissection of $X$ is a collection $\{X(\sigma) \mid \sigma \in K\}$ of subcomplexes of $X$ so that
\begin{enumerate}
 
 \item $X(\sigma) \cap X(\rho) = 
\begin{cases}
X(\sigma \cap \rho) & \text{if } \sigma \cap \rho \in K \\
\emptyset & \text{ otherwise}
\end{cases}$
 
 \item $X =\displaystyle \bigcup_{\sigma \in K} X(\sigma)$.   
 
\end{enumerate}

\item A $K^{\op}$-dissection of $X$ is a collection $\{X(\sigma) \mid \sigma \in K\}$ of subcomplexes of $X$ so that
\begin{enumerate}
 
 \item $X(\sigma) \cap X(\rho) = 
\begin{cases}
X(\sigma \cup \rho) & \text{if } \sigma \cup \rho \in K \\
\emptyset & \text{ otherwise}
\end{cases}$
 
 \item $X =\displaystyle \bigcup_{\sigma \in K} X(\sigma)$.   
 
\end{enumerate}
\end{enumerate}
\end{definition}
Here $\sigma \cup \rho$ is the smallest simplex of $K$ which contains $\sigma$ and $\rho$, if it exists.  Note that in a $K$-dissection $\sigma \leq \tau$ implies that $X(\sigma) \subset X(\tau)$, while in a $K^{\op}$-dissection, $\sigma \leq \tau$ implies that $X(\tau) \subset X(\sigma)$.

\begin{remark}
The $K^{\op}$-dissections described here are $K$-dissections in Ranicki's terminology.
\end{remark}

\begin{example} \label{K_dissected_by_K}
The geometric realization of a finite simplicial complex $K$ has both a $K$-dissection given by the geometric realization of the simplices and a $K^{\op}$-dissection given by the dual cones of simplices.  We describe the latter in order to fix notation.

   Let $K'$ be the barycentric subdivision of $K$.   The vertices of $K'$ are the barycenters $\ol {\sigma}_i$ of the geometric realization of the simplices $\sigma_i \in K$.   An $r$-simplex in $K'$ is given by a sequence $\ol{\sigma}_0 \ol{\sigma}_1 \dots \ol{\sigma}_r$ where $\sigma_i < \sigma_{i+1}$, and   $K'$ is a subdivision of $K$ (see 
Chapter 3, Section 3 of Spanier  \cite{S66}  for the definition of a subdivision); in particular there is a PL-homeomorphism $|K'| \to |K|$.  For $\sigma \leq \tau \in K$, the {\em dual cell $D_{\tau} \sigma$} is the union of the geometric realization of all simplices $\ol{\sigma}_0 \ol{\sigma}_1 \dots \ol{\sigma}_r$ of the barycentric subdivision so that $\sigma \leq \sigma_0 < \sigma_1 < \dots < \sigma_r \leq \tau$.    Define the {\em dual cone of $\sigma \in K$} to be $D_K \sigma = \cup_{\{\tau \mid \sigma \leq \tau\}} D_\tau \sigma$.   Then $\{ D_K \sigma\}$ gives a $K^{\op}$-dissection of the geometric realization of $K$.
   
  With $K$ a $2$-simplex, Figure \ref{fig:dissection-2-simplex} shows a $K$ and $K^{\op}$ dissection of the geometric realization of a $2$-simplex.
      
   \begin{figure}[ht!]
\labellist
\small\hair 2pt
\pinlabel \small {\textit{ $K'$}} at 47 50
\pinlabel \small {\tiny{$\overline{ \sigma_{0}}$}} at 5  67
\pinlabel \small {\tiny{$\overline{ \sigma_{1}}$}} at 87  67
\pinlabel \small {\tiny{$\overline{ \sigma_{2}}$}} at 45  155
\pinlabel \small {\tiny{$\overline{ \sigma_{02}}$}} at 15  112
\pinlabel \small {\tiny{$\overline{ \sigma_{12}}$}} at 77  112
\pinlabel \small {\tiny{$\overline{ \tau}$}} at 52  108
\pinlabel \small {\tiny{$\overline{ \sigma_{01}}$}} at 47  67
\pinlabel \small {\textit{dual cells}} at 170 50
\pinlabel \small {\scriptsize{$D_{\tau} \sigma_{0}$}} at 150  85
\pinlabel \small {\scriptsize{$D_{\tau} \sigma_{1}$}} at 190  85
\pinlabel \small {\scriptsize{$D_{\tau} \sigma_{2}$}} at 170  115
\pinlabel \small {\scriptsize{$D_{\sigma_0}\sigma_0$}} at 130  67
\pinlabel \small { \textit{$K^{\op}$-dissection}} at 305 110
\pinlabel \small { \scriptsize{$ D_K\sigma_0$}} at 285 141
\pinlabel \small { \scriptsize{$ D_K\sigma_1$}} at 322 141
\pinlabel \small { \scriptsize{$ D_K\sigma_2$}} at 303 170
\pinlabel \small {\textit{$K$-dissection} } at 305 -10
\pinlabel \small {\scriptsize{\textit{$\sigma_0$} }} at 265 03
\pinlabel \small {\scriptsize{\textit{$\sigma_1$} }} at 352 03
\pinlabel \small {\scriptsize{\textit{$\sigma_2$} }} at 309 87
\pinlabel \small {\scriptsize{\textit{$\sigma_{02}$} }} at 273 43
\pinlabel \small {\scriptsize{\textit{$\sigma_{12}$} }} at 341 43
\pinlabel \small {\scriptsize{\textit{$\sigma_{01}$} }} at 309 03
\pinlabel \small {\small{\textit{$\tau$} }} at 308 35
\endlabellist
\centering
\includegraphics[width=130mm, height= 75mm]{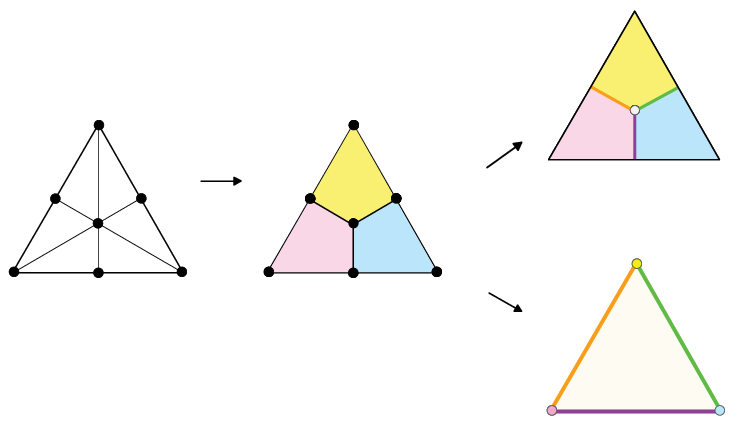}
\vspace{10pt}
\caption{Dual cells and $K$- and $K^{\op}$-dissections of a 2-simplex}
\label{fig:dissection-2-simplex}
\end{figure}
   
\end{example}

If $(Y,B)$ is a CW-pair, then the surjection of the cellular $n$-chains $C_nY \to C_n(Y,B)$ has a canonical splitting (informally, it is given by the span of the $n$-cells of $Y-B$).    We will thus consider $C_n(Y,B)$ to be a subgroup of $C_nY$.   More generally, given a CW-triple $(Z,Y,B)$, we consider $C_n(Y,B)$ to be a subgroup of $C_nZ$.

If $X$ has a $K$-dissection and $\sigma \in K$, define 
 $\partial X(\sigma) = \cup_{\rho < \sigma} X(\rho)$.   This is a subcomplex of $X(\sigma)$.   For every cell $e$ of $X$, there is a unique $\sigma$ so that $e \subset (X(\sigma) - \partial X(\sigma))$.
Then $C_nX$ is $K$-based with 
$$C_nX = \bigoplus_{\sigma \in K} C_n(X(\sigma), \partial X(\sigma)).$$
In fact $C(X) \in \ch(\Z(K)\Mod)$.

Corresponding assertions hold in the dual case.   One defines $\partial X(\sigma) = \cup_{\sigma < \tau} X(\tau)$.  Then $C_nX$ is $K$-based with 
$$C_nX = \bigoplus_{\sigma \in K} C_n(X(\sigma), \partial X(\sigma)).$$
In fact $C(X) \in \ch(\Z(K^{\op})\Mod)$.
\begin{example}

In Figure \ref{fig:space-dissections}, we give examples of a $K$-dissection and a $K^{\op}$-dissection.

   \begin{figure}[ht!] 
\labellist
\small\hair 2pt
\pinlabel \small {$\tau$} at 73 -5
\pinlabel \small {$\sigma_0$} at 13 -5
\pinlabel \small {$\sigma_1$} at 136 -5
\pinlabel \small {$X(\tau)$} at 70 100
\pinlabel \small {\small{$X(\sigma_0)$}} at -17 70
\pinlabel \small {\small{$X(\sigma_1)$}} at 165 70
\pinlabel \small {$D_K\tau$} at 314 -5
\pinlabel \small {$D_K\sigma_0$} at 270 -5
\pinlabel \small {$D_K\sigma_1$} at 355 -5
\pinlabel \small {$X(\tau)$} at 347 125
\pinlabel \small {\small{$X(\sigma_0)$}} at 276 80
\pinlabel \small {\small{$X(\sigma_1)$}} at 342 80
\endlabellist
\centering
\includegraphics[width=120mm, height= 40mm]{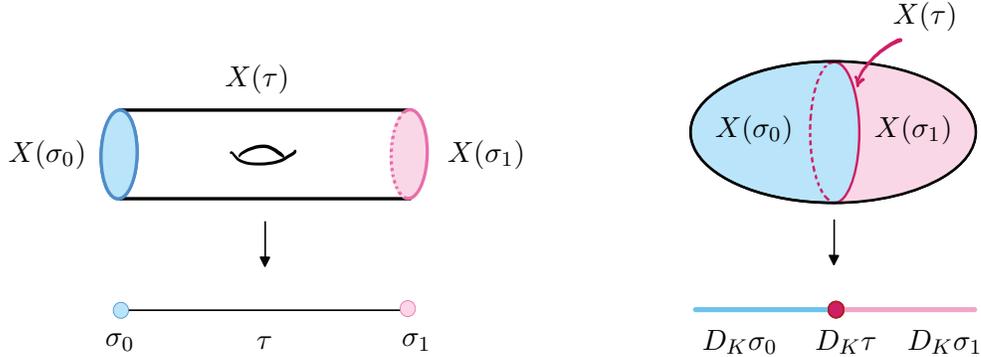}
\vspace{10pt}
\caption{$K$- and $K^{\op}$-dissections}
\label{fig:space-dissections}
\end{figure}
Note that if $X$ is an $n$-dimensional manifold, then a map $f: X \to K'$ can be made transverse to the dual cells $D_K \sigma$ for all $\sigma \in K$ so that $f^{-1}(D_K \sigma)$ is a submanifold of dimension $n- |\sigma|$.
   
\end{example}
The notions of $K$ and $K^{\op}$-dissections are special cases of the notion of a free $\cC$-CW-complex for a category $\cC$ defined in Section 3 of  \cite{DL98}.

\section{The categorical point of view} \label{sec:cat_point_of_view}

$\Z(K)\Mod$ is an additive category, but to use homological algebra, one needs to embed it in an abelian category.   In this section we develop this point of view.

Let $K$ be a small category.

Recall that $\sigma \in K$ means $\sigma$ is an object of $K$  and that $K(\sigma, \tau)$ is the set of morphisms from $\sigma$ to $\tau$.
  Given a morphism $\alpha : \sigma \to \tau$, let $s_\alpha = \sigma$, the source, and let $t_\alpha  = \tau$, the target.  Let $\mor_K$ be the set of all morphisms in $K$.

Let $\Z [K]\Mod$ be the category whose objects are functors $M: K \to \Z\Mod$ and whose morphisms are natural transformations.   An example of a $\Z[K]$-module is the trivial module $\ul\Z^K$ where $\ul\Z^K(\sigma) = \Z$ for every object $\sigma$ and $\ul\Z^K(\alpha) = \id_\Z$ for every morphism $\alpha$. The category $\Z [K]\Mod$ is an abelian category.  Its morphism sets are abelian groups.  A $\Z [K]$-module $M$ is {\em finitely generated} if there are $a_1 \in M(\sigma_1), \dots, a_k \in M(\sigma_k)$ so that for any $b \in M(\sigma)$, then $b = \sum M(\alpha _i)(a_i)$ for some morphisms $\alpha _1 \in K(\sigma_1,\sigma), \dots, \alpha _k \in K(\sigma_k,\sigma)$.  A $\Z [K]$-module is {\em free} if it is isomorphic to a direct sum of modules of the form $\Z[K(\sigma,-)]$ for some $\sigma \in K$ (the $\sigma$ can vary and repeat in the direct sum).  A {\em basis} for a $\Z[K]$-module $M$ is a collection of subsets ${B_\sigma \subset M(\sigma)}_{\sigma \in K}$ so that  any $b \in M(\tau)$ for any $\tau$ can be expressed uniquely as $b = \sum M(\alpha _i)(b_i)$ for morphisms $\alpha _i$ and basis elements $b_i$.

We write $\Hom_{\Z[K]}(M,N)$ for the morphism set $\Z[K]\Mod(M,N)$ given the structure of an abelian group as a subgroup of $\prod_{\sigma \in K} \Hom_\Z(M(\sigma), N(\sigma))$.   When $M$ is a $\Z[K^{\op}]$-module and $N$ is a $\Z[K]$-module, define the {\em tensor product} 
$$
M \otimes_{\Z[K]} N = \frac{\bigoplus_{\sigma \in K} M(\sigma) \otimes_\Z N(\sigma)}{(mf,n) \sim (m,fn)}
$$
This is an abelian group.   For an abelian group $A$,  there is the adjoint isomorphism
$$
\Hom_\Z(M \otimes_{\Z[K]} N , A) \cong \Hom_{\Z[K^{\op}]}(M, \Hom_\Z(N,A)),
$$
natural in $M$, $N$, and $A$.

Yoneda's Lemma gives isomorphisms of abelian groups
\begin{align*}
 \Hom_{\Z[K]}(\Z[K(\sigma,-)],N)& \cong N(\sigma) \\
M \otimes_{\Z[K]}  \Z[K(\sigma,-)] & \cong M(\sigma).
\end{align*}
Here for a set $S$, $\Z[S]$ is the free abelian group with basis $S$ (elements are $\sum n_is_i$) and $ \Z[K(\sigma,-)] : K \to \Z\Mod$ is the functor $\tau  \mapsto \Z[K(\sigma,\tau)]$.

Let $F : J \to K$ be a functor.  Let $F^* : \Z [K]\Mod \to \Z [J]\Mod$ be the {\em restriction} $F^*N = N \circ F$.  It has a left adjoint {\em induction} $F_* : \Z [J]\Mod \to \Z [K]\Mod$ with
$$F_*M(??) = \Z[K(F(?),??)] \otimes_{\Z [J]} M(?)$$ 
where $F^*$ is exact, and $F_*$ takes projective objects to projective objects.  They satisfy the adjoint property  (see Lemma 1.9 of \cite{DL98}).

$$
\Hom_{\Z [K]}(F_*M,N)  \xrightarrow{\cong} \Hom_{\Z [J]}(M,F^*N).
$$
In the special case $c : K \to 1$ where $1$ is the trivial category, $c_*M = \colim_K M = \ul{\Z}^{K^{\op}} \otimes_{\Z[K]} M$.

We now generalize Definition \ref{finite poset}.

\begin{definition} 
Let $K$ be a small category.
Let $\Z(K)\Mod$ be the category whose objects are $K$-based abelian groups  
$$
M = \bigoplus_{\sigma \in K} M(\sigma)
$$ 
with $M$ a finitely generated free abelian group
and whose morphisms $f: M \to N$ are a collection of homomorphisms of abelian groups $\{f_\alpha : M(t_\alpha) \to N(s_\alpha)\}_{ \alpha \in \mor_K}$, which are zero for all but a finite number of $\alpha$.  The composition law is given by
$$
(g \circ f)_\alpha = \sum_{ \beta \circ \gamma  = \alpha}  g_\gamma \circ f_\beta.
$$
\end{definition}

An example of a $\Z(K)$-module is given by $\Z_{\tau}$ where $$\Z_\tau(\sigma) = 
\begin{cases}
\Z &   \sigma = \tau \\
0 &  \sigma \not = \tau.
\end{cases}$$   Every $\Z(K)$-module is isomorphic to a finite direct sum of such modules.

Define a functor
$$
[~]: \Z(K)\Mod \to \Z[K]\Mod
$$
by sending $M$ to $[M](\sigma) = \bigoplus_{? \to \sigma} M(?)$ and $f = \{f_\alpha\}: M \to N$ to the map $[f] : [M] \to [N]$ where the  $?? \to \sigma, ? \to \sigma$ component of
$$[f](\sigma) :[M](\sigma) =  \bigoplus_{? \to \sigma} M(?)  \to [N](\sigma) = \bigoplus_{?? \to \sigma} N(??)$$
is given by the sum of all $f_{?? \to ?}$ where  $?? \to ?$ satisfies
 $?? \to \sigma = (? \to \sigma) \circ (?? \to ?)$.  
An equivalent definition of  $[~]$ is
$$
[M] = \left[  \bigoplus_{\sigma \in K} M(\sigma) \right] = \bigoplus_{\sigma \in K} \Z[K(\sigma,-)] \otimes_\Z M(\sigma).
$$
The functor $[~]$ is a full, additive embedding.  Note that there is a canonical identification $[\Z_\tau] = \Z[K(\tau, - )]$.

Note that there is an embedding $M(\sigma) \subset [M](\sigma)$, corresponding to the identity summand.   $[M]$ is a finitely generated free $\Z[K]$-module; if $\{B_\sigma\}_{\sigma \in K}$ is a collection of bases for the abelian groups $M(\sigma)$, then $\{B_\sigma\}_{\sigma \in K}$ is also a basis for the $\Z[K]$-module $[M]$.

\begin{remark} \label{R_notation}
In the case where $K$ is the category of simplices of a simplicial complex, here is the comparison of our notation with that of \cite{bluebook}:
\begin{align*}
\Z[K]\Mod &=  \A(\Z)^*[K]  \\
\Z[K^{\op}]\Mod &=  \A(\Z)_*[K] = [\Z,K]\Mod\\
\Z(K)\Mod &=  \A(\Z)^*(K)\\
\Z(K^{\op})\Mod &=  \A(\Z)_*(K) =  (\Z,K)\Mod\
\end{align*}
\end{remark}

\begin{lemma} \label{round_induction}
Let $F : J \to K$ be a functor.   Then the functor $F_* : \Z[J]\Mod \to \Z[K]\Mod$ restricts to the functor  $F_* : \Z(J)\Mod \to \Z(K)\Mod$ with 
\begin{align*}
F_*M(\tau) &= \bigoplus_{\sigma \in F^{-1} \tau} M(\sigma) \\
F_* (f)_\beta &= \sum_{\alpha \in F^{-1}\beta} f_\alpha \end{align*}
\end{lemma}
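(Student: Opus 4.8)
The plan is to unwind both sides of the claimed formula from the definition of the functor $F_* : \Z[J]\Mod \to \Z[K]\Mod$, namely $F_*M(??) = \Z[K(F(?),??)] \otimes_{\Z[J]} M(?)$, translated through the embedding $[~] : \Z(J)\Mod \to \Z[J]\Mod$. First I would take an object $M \in \Z(J)\Mod$, so $M = \bigoplus_{\sigma \in J} M(\sigma)$ with each $M(\sigma)$ finitely generated free abelian, and observe that under the embedding $[~]$ we have $[M] = \bigoplus_{\sigma \in J} [\Z_\sigma]^{\oplus r_\sigma} = \bigoplus_{\sigma \in J} \Z[J(\sigma,-)]^{\oplus r_\sigma}$ where $r_\sigma = \rk M(\sigma)$; that is, $[M]$ is free on the bases $\{B_\sigma\}$. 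Since $F_*$ commutes with direct sums, it suffices to compute $F_*$ on a single generating summand $\Z[J(\sigma,-)]$. By the Yoneda identification $M \otimes_{\Z[J]} \Z[J(\sigma,-)] \cong M(\sigma)$ applied with $M(?) = \Z[K(F(?),??)]$, we get $F_*(\Z[J(\sigma,-)])(??) = \Z[K(F(?),??)]\otimes_{\Z[J]} \Z[J(\sigma,?)] \cong \Z[K(F(\sigma),??)]$, i.e. $F_*(\Z[J(\sigma,-)]) \cong \Z[K(F(\sigma),-)] = [\Z_{F(\sigma)}]$. Summing over the summands of $[M]$ then gives $F_*[M] \cong \bigoplus_{\sigma\in J}[\Z_{F(\sigma)}]^{\oplus r_\sigma} = [N]$ where $N(\tau) = \bigoplus_{\sigma \in F^{-1}\tau} M(\sigma)$, which is the object-level formula; in particular $N$ is again finitely generated free over each $\tau$, so $F_*$ does land in $\Z(K)\Mod$ (up to the canonical identification via $[~]$).

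Next I would check the formula on morphisms. Given $f = \{f_\alpha : M(t_\alpha)\to M(s_\alpha)\}_{\alpha\in\mor_J}$ in $\Z(J)\Mod$, I would track the induced map $F_*[f] = \id_{\Z[K(F(?),??)]} \otimes_{\Z[J]} [f]$ on the generating summands. The key point is that a morphism in $\Z(J)\Mod$ indexed by $\alpha : \sigma \to \sigma'$ corresponds, under $[~]$, to the $\Z[J]$-map $\Z[J(\sigma',-)] \to \Z[J(\sigma,-)]$ given by precomposition with $\alpha$, and tensoring with $\Z[K(F(?),??)]$ over $\Z[J]$ sends this to precomposition with $F(\alpha) : F(\sigma)\to F(\sigma')$, i.e. to the $\Z(K)\Mod$-morphism indexed by $F(\alpha)$. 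Hence the $\beta$-component of $F_*(f)$ collects exactly the contributions of those $f_\alpha$ with $F(\alpha) = \beta$, giving $F_*(f)_\beta = \sum_{\alpha \in F^{-1}\beta} f_\alpha$. I would also remark that the finiteness condition (only finitely many $f_\alpha$ nonzero) is preserved since $F^{-1}\beta$ intersected with the support of $f$ is finite, so the sum is well-defined and $F_*(f)$ is a legitimate $\Z(K)\Mod$-morphism.

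Finally I would verify that this assignment is functorial --- compatible with composition and identities --- but since $F_*$ on $\Z[J]\Mod$ is already a functor and $[~]$ is a full faithful additive embedding, functoriality on $\Z(J)\Mod$ follows formally; one only needs the easy bookkeeping identity $\sum_{\gamma\in F^{-1}\beta'}\sum_{\alpha\in F^{-1}\beta}(g_\gamma\circ f_\alpha) = \sum_{\delta\in F^{-1}(\beta'\circ\beta)}(g\circ f)_\delta$, which is immediate from $F(\gamma\circ\alpha) = F(\gamma)\circ F(\alpha)$ and the composition law in $\Z(J)\Mod$. I expect the main obstacle to be purely notational: carefully matching the two Yoneda identifications and the three layers of indexing (objects of $J$, morphisms of $J$, and the coequalizer relation defining $\otimes_{\Z[J]}$) so that the precomposition-versus-postcomposition conventions line up correctly with the ``bigger-to-smaller'' morphism convention and the composition law $(g\circ f)_\alpha = \sum_{\beta\circ\gamma = \alpha} g_\gamma\circ f_\beta$. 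No genuinely hard mathematics is involved; the content is that induction along $F$ on free modules is just relabeling basis elements by $F$.
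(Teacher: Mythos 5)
Your argument is correct and is exactly the paper's proof, just written out in full: the paper also reduces to the generating modules $\Z_\tau$ (i.e.\ $[\Z_\tau] = \Z[K(\tau,-)]$) and invokes Yoneda's Lemma, leaving the bookkeeping on objects and morphisms implicit. Your expansion of the morphism-level check and the functoriality identity is consistent with what the paper leaves to the reader.
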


\begin{proof}
It suffices to verify this for the module $M = \Z_\tau$ in which case it follows from Yoneda's Lemma.
\end{proof}

Warning: the functor $F^* : \Z[K]\Mod \to \Z[J]\Mod$ does not restrict to a functor $ \Z(K)\Mod \to \Z(J)\Mod$.

Applying the above lemma to the constant functor $K \to 1$ gives the following corollary.

\begin{corollary} \label{colimit} 
The functor $\colim_K [\quad] : \Z(K)\Mod \to \Z\Mod$ satisfies $\colim_K [M] = \oplus M(\sigma)$ and $\colim_K [f] = \sum f_\alpha$.  In other words the colimit forgets the grading.
\end{corollary}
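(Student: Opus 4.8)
The plan is to read the corollary off from Lemma~\ref{round_induction} applied to the unique functor $c : K \to 1$, where $1$ is the trivial category, with single object $\ast$ and single morphism $\id_\ast$. As noted just before Lemma~\ref{round_induction}, $c_* N = \colim_K N$ for every $N \in \Z[K]\Mod$; so the functor in question, $\colim_K[~] : \Z(K)\Mod \to \Z\Mod$, is the composite of the embedding $[~] : \Z(K)\Mod \to \Z[K]\Mod$ with induction $c_* : \Z[K]\Mod \to \Z[1]\Mod = \Z\Mod$.

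First I would observe that the embedding $[~] : \Z(1)\Mod \to \Z[1]\Mod$ is, up to canonical identification, the identity functor: since $1$ has only the object $\ast$ and $1(\ast,\ast) = \{\id_\ast\}$, one computes $[N](\ast) = \bigoplus_{? \to \ast} N(?) = N(\ast)$. Thus the content of Lemma~\ref{round_induction} for $F = c$ is exactly that $c_*[M] = [c_*M]$ is given by the two displayed formulas of that lemma. Unwinding them, using $c^{-1}(\ast) = \ob K$ and $c^{-1}(\id_\ast) = \mor_K$, gives
\[
\colim_K[M] \;=\; c_*M(\ast) \;=\; \bigoplus_{\sigma \in K} M(\sigma),
\qquad
\colim_K[f] \;=\; c_*(f)_{\id_\ast} \;=\; \sum_{\alpha \in \mor_K} f_\alpha,
\]
which is the assertion of the corollary.

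For a reader who would rather not appeal to Lemma~\ref{round_induction}, I would sketch the direct argument: $\colim_K N$ is the cokernel of the homomorphism $\bigoplus_{\alpha \in \mor_K} N(s_\alpha) \to \bigoplus_{\sigma \in K} N(\sigma)$ that sends an element $n$ of the summand $N(s_\alpha)$ to $N(\alpha)(n) - n$ (the first term in the $t_\alpha$-summand, the second in the $s_\alpha$-summand). Since $[M]$ is the free $\Z[K]$-module on any choice of bases $\{B_\sigma\}_{\sigma \in K}$ of the abelian groups $M(\sigma)$, this reduces to the computation $\colim_K \Z[K(\sigma,-)] \cong \Z$, with $\id_\sigma$ mapping to a generator -- a form of Yoneda's Lemma. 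Summing over $\sigma$ and over $B_\sigma$ recovers $\bigoplus_\sigma M(\sigma)$, and tracing the isomorphism through the construction on morphisms produces $\sum_\alpha f_\alpha$.

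No serious obstacle arises. The only points demanding any care are the identifications $c^{-1}(\ast) = \ob K$ and $c^{-1}(\id_\ast) = \mor_K$ in the first approach, or, in the second, the standard colimit computation for a representable functor together with the bookkeeping in the coequalizer presentation of $\colim_K$.
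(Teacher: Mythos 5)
Your proof is correct and follows the paper's own route: the paper obtains this corollary precisely by applying Lemma~\ref{round_induction} to the constant functor $c : K \to 1$ together with the identification $c_*M = \colim_K M$, exactly as in your first paragraph. Your unwinding of $c^{-1}(\ast) = \ob K$ and $c^{-1}(\id_\ast) = \mor_K$, and the optional direct coequalizer/Yoneda argument, are fine but add nothing beyond the paper's one-line deduction.
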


\begin{example} \label{augmentation}
 Let $K$ be a finite simplicial complex.    Let $\{X(\sigma) \mid \sigma \in K\}$ be a $K$-dissection  (or $K^{\op}$-dissection)  of a CW complex $X$.  Then $\colim_K [C(X)] = C(|X|)$, the usual cellular chain complex of the underlying CW-complex.       And $[C(X)](\sigma) = C(X(\sigma))$, the cellular chain complex of $X(\sigma) \subset X$.    A $\Z[K]$-basis for $C(X)$ is given by the (oriented) cells of $X$.

In particular, the augmentation
$$
\varepsilon: [\Delta K] \to \ul \Z^K
$$
gives a chain homotopy equivalence, where $\Delta K$ is the $\Z(K)$-chain complex of the simplicial $K$-dissection of $|K|$ and $\ul \Z^K$ is the $\Z[K]$-chain complex given by placing $\Z$ in degree 0 at each object.   Here $\varepsilon(\sigma) = 1$ for all 0-simplices of $K$.  

Thus $[\Delta K]$ should be regarded as a finite free $\Z[K]$-resolution of $\ul \Z^K$.
\end{example}

\begin{definition}
 Let $\cF(\Z[K]\Mod)$ be the full subcategory of $\Z[K]\Mod$ consisting of all modules $M$ so that $\oplus M(\sigma)$ is a finitely generated free abelian group.
\end{definition}

\begin{definition}[Round-square tensor products and Homs]  There are additive functors
\begin{align*}
- \otimes_{\Z(K]} -& : \Z(K^{\op})\Mod \times\ \cF(\Z[K]\Mod) \to \Z(K^{\op})\Mod \\
\Hom_{\Z(K]} (-,-) &:(\Z(K)\Mod)^{\op} \times\ \cF(\Z[K]\Mod) \to \Z(K^{\op})\Mod
\end{align*}
with
\begin{align*}
(M \otimes_{\Z(K]} N)(\sigma) & = M(\sigma) \otimes_\Z N(\sigma)  \\
\Hom_{\Z(K]} (M,N)(\sigma) & = \Hom_{\Z} (M(\sigma),N(\sigma))
\end{align*}
\begin{align*}
(f \otimes g)_{\sigma \to \tau} & = f_{\sigma \to \tau}  \otimes g(\sigma \to \tau) \\
\Hom(f, g)_{\sigma \to \tau} & = \Hom( f_{\sigma \to \tau}, g(\sigma \to \tau)) \\
\end{align*}
where $f: M \to M'$ and $g : N \to N'$ are module morphisms and $\sigma \to \tau$ is a morphism in $K$.
 \end{definition}

 By additivity and the Tot construction, these functors extend to functors on chain complexes: $C \otimes_{\Z(K]} D$ and $\Hom_{\Z(K]} (C,D)$.  A careful look at the definitions shows that $C \otimes_{\Z(K]} D$ is a subcomplex of the chain complex  $C \otimes_\Z D$ 
  Also $\Hom_{\Z(K]} (C,D)$ is a subcomplex of  $\Hom_{\Z} (C,D)$. 
 
 \begin{remark}
 We will be loose with the $\Z(K]$-decorations in the following sense.   If it is clearly labelled what $M$ and $N$ are, we will just write $M \otimes N$ or $\Hom(M,N)$.  Conversely, if we write, for example, $M \otimes_{\Z(K] } N$, then we will assume that $M$ is a $\Z(K^{\op})$-module and $N$ is a $\Z[K]$-module.
\end{remark}

\begin{lemma}\label{rules} Hom and tensor products satisfy a myriad of identities; here are some we need:
 \begin{enumerate}
 \item $\colim_{K^{\op}} [C \otimes_{\Z(K]}  D]  = \bigoplus_{\sigma \in K} C(\sigma) \otimes_{\Z} D(\sigma) = [C] \otimes_{\Z[K]}  D $.
  \item $\colim_{K^{\op}}  [\Hom_{\Z(K]}(C, D)] = \Hom_{\Z[K]}([C], D) $.
  \item $(C \otimes_{\Z(K]} [D]) \otimes_{\Z(K]} [E]  \xrightarrow{\cong} (C \otimes_{\Z(K]} [E]) \otimes_{\Z(K]} [D]~;~  \\ 
  x\otimes y \otimes z  \mapsto (-1)^{|y||z|}x \otimes z \otimes y$ 
    \item $\colim_{K^{\op}} [C^{-*}] = (\colim_K [C])^{-*}$.
    \item $C^{-*} \otimes_{\Z(K]} D^{-*}  \xrightarrow{\cong}  (C \otimes_{\Z(K^{\op}]}  D)^{-*} ~;~    \alpha \otimes \beta \mapsto (x \otimes y \mapsto (-1)^{|x||\beta|} \alpha(x) \otimes \beta(y)$
  \item $C \xrightarrow{\cong} (C^{-*})^{-*} ~;~  x \mapsto (f \mapsto (-1)^{|x|}f(x))$  for $C \in \ch(\Z(K)\Mod)$.
  \item $  C \otimes_{\Z(K]} D \xrightarrow{\cong} \Hom_{\Z(K]}(C^{-*},D)  ~;~  (x \otimes y) \mapsto (f \mapsto (-1)^{|x||y|+ |x|}f(x) y)$.
  \item $  C^{-*} \otimes_{\Z(K]} D \xrightarrow{\cong} \Hom_{\Z(K]}(C,D)  ~;~  (f \otimes y) \mapsto (x \mapsto (-1)^{|x||y|}f(x) y)$.
\end{enumerate}
\end{lemma}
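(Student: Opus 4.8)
The plan is to exploit the common shape of all eight identities: each of the functors $\otimes_{\Z(K]}$, $\Hom_{\Z(K]}$, $-*$, and $[~]$ is built from an honest construction on finitely generated free abelian groups, performed one object $\sigma \in K$ at a time, and then extended to chain complexes by additivity together with the $\tot$ construction. I would therefore prove each identity in three steps. First, fix a single object $\sigma$ and reduce the claim to a statement about finitely generated free $\Z$-modules, for which the relevant input is one of the standard natural isomorphisms $V \xrightarrow{\cong} V^{**}$, $V^* \otimes_\Z W^* \xrightarrow{\cong} (V \otimes_\Z W)^*$, $V \otimes_\Z W \xrightarrow{\cong} \Hom_\Z(V^*, W)$, or the flip $V \otimes_\Z W \xrightarrow{\cong} W \otimes_\Z V$; at this stage one also checks that the resulting $\Z$-linear maps, now indexed by the morphisms $\sigma \to \tau$ of $K$, assemble into a morphism of $\Z(K)$- or $\Z(K^{\op})$-modules, i.e.\ respect the composition law $(g \circ f)_\alpha = \sum_{\beta\gamma = \alpha} g_\gamma \circ f_\beta$ rather than merely being a map of $K$-based abelian groups. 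Second, promote the module-level isomorphism to chain complexes: additivity handles direct sums, and the $\tot$ construction introduces precisely the Koszul signs recorded in the statement. Third, verify naturality and the chain-map property; both are inherited from the $\Z$-level maps, and it suffices to check the chain-map property on modules concentrated in a single degree, hence on finite direct sums of the basic modules $\Z_\tau$, where one may invoke Yoneda's Lemma in the forms $\Hom_{\Z[K]}(\Z[K(\sigma,-)], N) \cong N(\sigma)$ and $M \otimes_{\Z[K]} \Z[K(\sigma,-)] \cong M(\sigma)$ together with the identification $[\Z_\tau] = \Z[K(\tau,-)]$.

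With this template in hand the identities split into three groups. Items (4), (5), (6) concern only the duality functor and the $\tot$ sign conventions: objectwise they are $V \cong V^{**}$ and $V^* \otimes W^* \cong (V \otimes W)^*$, and the content is confirming that the displayed signs are exactly those making the maps chain isomorphisms --- for instance in (6) the sign $(-1)^{|x|}$ is forced by commutation with the degree $-1$ differential of a double dual, and I would prove (6) first so that the isomorphism $C \cong (C^{-*})^{-*}$ is available for later use. Items (7) and (8) are the tensor--Hom comparisons; objectwise both reduce to $V \otimes W \cong \Hom_\Z(V^*, W)$, with the signs $(-1)^{|x||y|+|x|}$ and $(-1)^{|x||y|}$ again pinned down by the chain-map requirement, and (7) may then be deduced from (8) by substituting $C^{-*}$ for $C$ and applying (6). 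Items (1), (2), and (4) involve $\colim$ and follow from Corollary \ref{colimit} (``the colimit forgets the grading''), which turns the left-hand sides into $\bigoplus_\sigma C(\sigma) \otimes_\Z D(\sigma)$, $\bigoplus_\sigma \Hom_\Z(C(\sigma), D(\sigma))$, and $\bigoplus_\sigma C(\sigma)^*$ respectively; these are then matched with $[C] \otimes_{\Z[K]} D$, $\Hom_{\Z[K]}([C], D)$, and $(\bigoplus_\sigma C(\sigma))^{-*}$ using the coend description of $\otimes_{\Z[K]}$ and a Yoneda reduction of $C$ to the generators $\Z_\tau$, for which $[C]$ becomes a representable module. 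Finally, item (3) is the symmetry isomorphism: objectwise it is the ordinary flip, the factor $(-1)^{|y||z|}$ is the Koszul sign produced by $\tot$, and naturality in all three variables is immediate from that of the flip.

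The only real obstacle is the sign bookkeeping in the second step: for each identity one must check that the exponent displayed is precisely the one forcing the map to commute with the relevant differential ($d_\otimes$, $d_{\Hom_\A}$, or the dual differential). None of this is conceptually difficult, but it has to be executed consistently across all eight items, because the subsequent construction of the chain duality $(T, \tau)$ and the proof of Theorem \ref{main} use identities (5)--(8) in combination, so their signs must fit together. I would accordingly arrange the write-up so that the double-dual isomorphism of (6) is established at the outset and then used to derive (7) from (8), keeping the number of independent sign verifications to a minimum.
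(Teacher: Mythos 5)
The paper offers no proof of Lemma \ref{rules} at all---it is stated as a list of routine identities, with only the following remark flagging that items (6)--(8) need the finitely-generated-free hypothesis---and your plan (objectwise standard isomorphisms for finitely generated free abelian groups, Koszul signs imposed by the $\tot$ differentials, Yoneda plus Corollary \ref{colimit} for the colimit identities (1), (2), (4), and deriving (7) from (8) via (6), which does yield the displayed sign $(-1)^{|x||y|+|x|}$) is exactly the verification the paper leaves to the reader. So your proposal is correct and consistent with the paper's implicit treatment; the only slight imprecision is the suggestion that the chain-map property can be checked on modules concentrated in a single degree, whereas in fact the maps are given by universal degreewise formulas and one simply checks commutation with the differentials across adjacent degrees directly.
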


For example, we will prove part 1 for modules; then the verification for chain complexes will be clear.   Let $C = \bigoplus_{\sigma \in K} C(\sigma)$ be a $K$-based abelian group and let $D$ be  
a $\Z[K]$-module.  Then
\begin{align*}
\colim_{K^{\op}} [C \otimes_{\Z(K]}  D]  & = \ul \Z^K \otimes_{\Z[K^{\op}]} \left[  C \otimes_{\Z(K]}  D \right] \\
& = \bigoplus_{\sigma \in K} \left(  \ul \Z^K \otimes_{\Z[K^{\op}]} \left(  \Z[K(-,\sigma)] \otimes_\Z C(\sigma) \otimes_\Z D(\sigma) \right) \right)\\
&= \bigoplus_{\sigma \in K}  C(\sigma) \otimes_\Z D(\sigma) \\
&=  \bigoplus_{\sigma \in K} C(\sigma) \otimes_\Z  \left( \Z[K(-,\sigma)]  \otimes_{\Z[K]} D \right) \\
& = [C] \otimes_{\Z[K]} D,
\end{align*}
where the second and fourth equalities are due to Yoneda's Lemma.

\begin{remark}
 The last three isomorphisms depend on the hypothesis that if $M\in \Z(K)\Mod$ then the underlying module is finitely generated free.    Without that hypothesis then one only gets a map from left to right.   To remind the reader of this fact, we will often write $C \xrightarrow{\cong}  (C^{-*})^{-*}$ and   $C \otimes_{\Z(K]} D \xrightarrow{\cong} \Hom_{\Z(K]}(C^{-*},D)$.  The last two isomorphisms are called {\em slant products}.
\end{remark}

\begin{definition}
A chain map $f : C \to D$ of $\Z[K]$-chain complex is a {\em weak equivalence} if for all $\sigma \in K$, $f(\sigma) : C(\sigma) \to D(\sigma)$ is an isomorphism on homology.
\end{definition}

\begin{proposition} \label{whe=che}
Let $f : A \to B$ be a map in $ \ch(\Z(K)\Mod)$.    Then $[f] : [A] \to [B]$ is a weak equivalence if and only if $f$ is a chain homotopy equivalence.   
\end{proposition}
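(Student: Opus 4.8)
The plan is to reduce the statement to standard homological algebra in the abelian category $\Z[K]\Mod$, exploiting two structural features of $[~]$: it is a full additive embedding, and for each $\sigma \in K$ the evaluation functor $\ev_\sigma : \Z[K]\Mod \to \Z\Mod$, $M \mapsto M(\sigma)$, is exact.

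First I would observe that, because $[~]$ is a full additive embedding, a chain map $f$ in $\ch(\Z(K)\Mod)$ is a chain homotopy equivalence if and only if $[f]$ is a chain homotopy equivalence in $\ch(\Z[K]\Mod)$: applying $[~]$ transports a homotopy inverse and the two witnessing chain homotopies in one direction, while fullness lifts such data for $[f]$ back to $\ch(\Z(K)\Mod)$ and faithfulness shows the lifts still satisfy the required identities. Since ``$f$ is a weak equivalence'' is by definition a statement about $[f]$, and $[A]$, $[B]$ are bounded complexes of finitely generated free $\Z[K]$-modules, it remains to prove: a chain map $\phi : [A] \to [B]$ between bounded complexes of finitely generated free $\Z[K]$-modules is a weak equivalence if and only if it is a chain homotopy equivalence.

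One direction is immediate: if $\phi$ is a chain homotopy equivalence then, for each $\sigma$, the additive functor $\ev_\sigma$ sends it to a chain homotopy equivalence $\phi(\sigma)$ of complexes of abelian groups, which is therefore a homology isomorphism, so $\phi$ is a weak equivalence. For the converse, suppose $\phi$ is a weak equivalence and form the mapping cone $\cone(\phi)$, a bounded complex of finitely generated free $\Z[K]$-modules. Recall the standard fact that $\phi$ is a chain homotopy equivalence if and only if $\cone(\phi)$ is contractible. Since $\ev_\sigma$ is exact, $H_n(\cone(\phi))(\sigma) \cong H_n(\cone(\phi(\sigma)))$, and this vanishes for all $n$ because $\phi(\sigma)$ is a homology isomorphism; hence $\cone(\phi)$ is acyclic in $\Z[K]\Mod$. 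It therefore suffices to prove the classical fact that a bounded acyclic complex $P_\bullet$ of projective $\Z[K]$-modules is contractible: arguing by induction on the number of nonzero terms, the differential $d : P_{a+1} \to P_a$ onto the bottom nonzero term is surjective by acyclicity, hence split since $P_a$ is projective, exhibiting $P_\bullet$ as the direct sum of the contractible elementary complex $(0 \to P_a \xrightarrow{=} P_a \to 0)$ and a shorter acyclic complex of projective modules; a direct sum of contractible complexes is contractible.

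The argument has no deep difficulty; the step requiring the most care is the first one, where one must check that ``chain homotopy equivalent / contractible as complexes of $\Z[K]$-modules'' really is equivalent to the corresponding assertion over $\Z(K)\Mod$, together with the easy but essential observation that a weak equivalence is precisely a chain map whose mapping cone is acyclic in the abelian category $\Z[K]\Mod$ --- which is what licenses passing to projective modules and splitting arguments.
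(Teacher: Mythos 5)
Your argument is correct and is exactly the route the paper takes: its proof is a one-line appeal to the standard fact that, for bounded complexes of projectives in an abelian category, a weak equivalence is a chain homotopy equivalence, and your mapping-cone/splitting argument together with the transfer along the full additive embedding $[~]$ is precisely the expansion of that citation. No gaps.
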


\begin{proof}
 Standard techniques from homological algebra show that, in an abelian category if the chain complexes are projective 
and bounded below, then a weak equivalence is a chain homotopy equivalence. 
 Thus $[f]$ is a weak equivalence if and only if $[f]$ is a chain homotopy equivalence.   But 
 since the embedding of $\Z(K)\Mod$ in $\Z[K]\Mod$ is full, this occurs if and only if $f$ is a chain homotopy equivalence.  
\end{proof}

\begin{proposition} \label{whe}
Let $C, D \in \ch(\cF(\Z[K^{\op}]\Mod))$ and $E \in \ch(\Z(K)\Mod)$.   If $f : C \to D$ is a weak equivalence, then $f \otimes_{\Z[K]} \id : C \otimes_{\Z[K]} [E] \to D \otimes_{\Z[K]} [E]$ is a weak equivalence.
\end{proposition}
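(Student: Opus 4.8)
The plan is to deduce the statement from the hypothesis on $f$ alone, exploiting that $[E]$ is a bounded chain complex of finitely generated free $\Z[K]$-modules, so that Yoneda's Lemma applies. First I would reduce to the following assertion: \emph{if $G$ is any bounded chain complex of finitely generated free $\Z[K]$-modules, then $f\otimes_{\Z[K]}\id_G\colon C\otimes_{\Z[K]}G\to D\otimes_{\Z[K]}G$ is an isomorphism on homology.} The proposition then follows by taking $G=[E]$: the complex $[E]$ is bounded (as $E$ is finite) and consists of finitely generated free $\Z[K]$-modules in each degree, since $[E]_n=[E_n]$ and choosing a $\Z$-basis of each $E_n(\sigma)$ yields a $\Z[K]$-basis of $[E_n]$; and $C\otimes_{\Z[K]}[E]$, $D\otimes_{\Z[K]}[E]$ are chain complexes of abelian groups, for which ``weak equivalence'' means ``isomorphism on homology.''

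For the base case of the assertion I would take $G$ concentrated in a single degree $q$, so $G\cong\bigoplus_{i=1}^{k}\Z[K(\tau_i,-)]$ placed in degree $q$. The Yoneda isomorphism $M\otimes_{\Z[K]}\Z[K(\tau,-)]\xrightarrow{\cong}M(\tau)$, natural in the $\Z[K^{\op}]$-module $M$, applied levelwise to $C$ and to $D$ and summed over $i$, identifies $C\otimes_{\Z[K]}G$ with $\bigoplus_{i=1}^{k}C(\tau_i)$ (shifted into degree $q$) and, by naturality, identifies $f\otimes_{\Z[K]}\id_G$ with $\bigoplus_{i=1}^{k}f(\tau_i)$. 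Each $f(\tau_i)\colon C(\tau_i)\to D(\tau_i)$ is a homology isomorphism because $f$ is a weak equivalence, hence so is the finite direct sum.

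For the general case I would induct on the number $N$ of degrees in which $G$ is nonzero, the base case handling $N\le 1$. For $N\ge 2$, choose $q$ with $G_q\ne 0$ and $G_n\ne 0$ for some $n<q$; let $G'\subset G$ be the subcomplex supported in degrees $<q$ with the differentials inherited from $G$, and let $G''=G/G'$, supported in degrees $\ge q$. Then $G'$ and $G''$ are bounded chain complexes of finitely generated free $\Z[K]$-modules nonzero in fewer than $N$ degrees, and the short exact sequence $0\to G'\to G\to G''\to 0$ of $\Z[K]$-chain complexes is split in every degree; hence it stays exact after applying $C_p\otimes_{\Z[K]}-$ for each $p$, and totalizing yields a short exact sequence $0\to C\otimes_{\Z[K]}G'\to C\otimes_{\Z[K]}G\to C\otimes_{\Z[K]}G''\to 0$ of chain complexes of abelian groups, together with its analogue for $D$, related by $f\otimes_{\Z[K]}\id$. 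By the inductive hypothesis $f\otimes_{\Z[K]}\id$ is a homology isomorphism on the sub- and the quotient complex, so the five lemma applied to the map of long exact homology sequences gives the same on the middle, completing the induction.

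I do not expect a genuine obstacle here; the argument is homological bookkeeping, and the only points requiring care are the naturality of the Yoneda isomorphism in its first variable (which is what makes $f\otimes_{\Z[K]}\id$ agree with $\bigoplus_i f(\tau_i)$ in the base case) and the fact that the truncation sequence is split in each degree (which is what allows one to tensor it with $C$ and with $D$ without losing exactness, since $-\otimes_{\Z[K]}-$ is in general only right exact). Alternatively, and perhaps more slickly, one could filter the double complex $(C_p\otimes_{\Z[K]}[E]_q)$ by the $[E]$-degree: the filtration is finite since $[E]$ is bounded, the $E^1$-page is $\bigoplus_i H_*(C(\tau_i))$ by Yoneda, $f$ induces an isomorphism on $E^1$, and the comparison theorem for spectral sequences gives the conclusion. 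Either way, the finite generation and freeness of $C$ and $D$ over $\Z$ are not used; what is used is the freeness of $[E]$ over $\Z[K]$.
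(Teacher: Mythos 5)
Your argument is correct and is exactly the route the paper intends: the paper's entire proof is the single sentence ``This is proved by induction on the length of $E$,'' and your base case (Yoneda applied to $[E_n]\cong\bigoplus_i\Z[K(\tau_i,-)]$, using naturality in the first variable) together with the degreewise-split truncation sequence and the five lemma is the standard way to carry out that induction. No gaps; your writeup just supplies the details the paper omits.
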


This is proved by induction on the length of $E$.  

The main result of this paper asserts that the additive categories of chain complexes $\ch((\Z(K^{\op})\Mod)$ and $\ch((\Z(K)\Mod)$ admit a chain duality for a finite simplicial complex $K$.    The round-square tensor product is enough to construct the chain duality functor, but to prove that it is a chain duality we need to introduce the round tensor product and the category below.   We discuss the geometric motivation in the next section.  

For a simplicial complex $K$, let $DK$ be the poset whose objects are 1-chains 
$(\sigma \leq \tau)$
and whose inequalities are $(\sigma \leq \tau) \leq (\sigma' \leq \tau')$ if and only if $\sigma' \leq \sigma \leq \tau \leq \tau'$.  (A 1-chain is simply a pair of simplices $\sigma,\tau \in K$ with $\sigma$ a face of $\tau$.)   There are morphisms of posets 
$$
\begin{tikzcd}
& K^{\op}\\
DK \arrow[ru,"\pi^K"] \arrow[rd,"\pi_K"] & \\
& K
\end{tikzcd}
$$
with $\pi^K(\sigma \leq \tau)  = \sigma$ and $\pi_K(\sigma \leq \tau)  = \tau$.   Note that $DK \cong DK^{\op}$ with 
$(\sigma \leq \tau) \leftrightarrow (\tau \geq\sigma)$.

In figure \ref{fig:dissection-2-simplex}, we see an example of the geometric realization of the posets $DK$ (center of the figure), $K$, and $K^{\op}$ (lower and upper right side respectively) when $K$ is a $2$-simplex.

\begin{definition}[Round tensor products and Homs]  We have the following additive functors
\begin{enumerate}
\item over $\Z(K)$,
  \begin{align*}
 - \otimes_{\Z(K)}- & :  \Z(K^{\op})\Mod \times \Z(K)\Mod \to \Z(DK)
\Mod \\
 \Hom_{\Z(K)}(-,-)  &: (\Z(K)\Mod)^{\op} \times~ \Z(K)\Mod \to \Z(DK )\Mod
\end{align*}
with 
\begin{align*}
 (M \otimes_{\Z(K)} N)(\sigma \leq \tau) &= M(\sigma) \otimes_\Z N(\tau)\\
 \Hom_{\Z(K)}(M,N)(\sigma \leq \tau)  &= \Hom_{\Z}(M(\sigma),N(\tau))
 \end{align*}
\begin{align*}
(f \otimes g)_{(\sigma \leq \tau) \leq (\sigma' \leq \tau')}
 & = f_{\sigma' \leq \sigma}  \otimes g_{\tau \leq \tau'} \\
\Hom(f, g)_{(\sigma \leq \tau) \leq (\sigma' \leq \tau')}
& = \Hom( f_{\sigma' \leq \sigma}, g_{\tau \leq \tau'}) \\
\end{align*}

\item over $\Z(K^{\op})$,
\begin{align*}
 - \otimes_{\Z(K^{\op})}- & :  \Z(K)\Mod \times \Z(K^{\op})\Mod \to \Z(DK^{\op})
\Mod \\
 \Hom_{\Z(K^{\op})}(-,-)  &: (\Z(K^{\op})\Mod)^{\op} \times~ \Z(K^{\op})\Mod \to \Z(DK^{\op} )\Mod
\end{align*}
with 
\begin{align*}
 (M \otimes_{\Z(K^{\op})} N)(\sigma \leq \tau) &= M(\tau) \otimes_\Z N(\sigma)\\
 \Hom_{\Z(K^{\op})}(M,N)(\sigma \leq \tau)  &= \Hom_{\Z}(M(\tau),N(\sigma))
 \end{align*}
\begin{align*}
(f \otimes g)_{(\sigma\leq \tau) \leq (\sigma' \leq \tau')} 
& = f_{\tau \leq \tau'}  \otimes g_{\sigma' \leq \sigma} \\
\Hom(f, g)_{(\sigma \leq \tau) \leq (\sigma'\leq  \tau')}
 & = \Hom( f_{\tau \leq \tau'}, g_{\sigma' \leq \sigma}) \\
\end{align*}

\end{enumerate}
\end{definition}

 By additivity and the Tot construction, these functors extend to functors on chain complexes: $\begin{cases} C \otimes_{\Z(K)} D \\ C \otimes_{\Z(K^{\op})} D \end{cases}$  and  $\begin{cases} \Hom_{\Z(K)} (C,D)  \\ \Hom_{\Z(K)^{\op}} (C,D) \end{cases}.$
 

\medskip

In the formula for $e_C : T^2C \to C$, the $\Z(DK)$-chain complex $\Delta K^{-*} \otimes_{\Z(K)} \Delta K$ arises.    We need a weak equivalence
$$
\varepsilon: [\Delta K^{-*} \otimes_{\Z(K)} \Delta K] \to \ul\Z^{DK}
$$
similar to the augmentation of Example \ref{augmentation}.

The (oriented) simplices $\{\sigma\}$ give a basis for $\Delta K$ and the dual basis $\{\wh \sigma\}$ gives a basis for $\Delta K^{-*}$.   A basis of $\Delta K^{-*} \otimes_{\Z(K)} \Delta K$ is given by $\{\wh \sigma \otimes \tau\}$, where $\sigma$ is a face of $\tau$.
  Define  $\varepsilon(\wh \sigma \otimes \sigma) = 1$ for any simplex $\sigma$ and $\varepsilon(\wh \sigma \otimes \tau) = 0$ if $\sigma$ is a proper face of $\tau$.
To verify that $\varepsilon$ is a chain map, we need to show that  if  $\dim \tau = \dim \sigma + 1$, then $\varepsilon(\partial(\wh \sigma \otimes \tau)) =  \partial(\varepsilon(\wh \sigma \otimes \tau))  = \partial(0) = 0$.  Let $[\tau:\sigma]$ be the incidence number: the coefficient of $\sigma$ in $\partial \tau$.     Note
$$
\partial (\wh \sigma \otimes \tau) = [\tau: \sigma]((-1)^{|\sigma|+1} \wh \tau \otimes \tau + (-1)^{|\sigma|} \wh \sigma \otimes \sigma).
$$
Hence 
$$
\varepsilon ( \partial (\wh \sigma \otimes \tau) ) = 0.
$$

Thus $\varepsilon$ is a chain map.   We provide a proof that  $\varepsilon$ is a chain equivalence with the help of the geometry of the dual cell decomposition in the next section.  
\medskip

Note that similarly, a $\Z[DK^{\op}]$-basis of $[\Delta K \otimes_{\Z(K^{\op})} \Delta K^{-*}] $ is given by $ \tau \otimes \wh \sigma$, where $\sigma$ is a face of $\tau$.
  We define  $\varepsilon(\sigma \otimes \wh \sigma) = (-1)^{|\sigma|}$ for any simplex $\sigma$ and $\varepsilon(\tau \otimes \wh \sigma) = 0$ if $\sigma$ is a proper face of $\tau$.
To verify that $\varepsilon$ is a chain map, we need to show that  if  $\dim \tau = \dim \sigma + 1$, then $\varepsilon(\partial(\tau \otimes \wh \sigma)) =  \partial(\varepsilon(\tau \otimes \wh \sigma))  = \partial(0) = 0$.  Like before, let $[\tau:\sigma]$ be the incidence number.     Note
\begin{align*}
\partial (\tau \otimes \wh \sigma) &  = [\tau: \sigma]( \sigma \otimes  \wh \sigma + (-1)^{|\tau|} (-1)^{|\sigma|+1}  \tau \otimes \wh \tau) \\
& = [\tau: \sigma]( \sigma \otimes  \wh \sigma +   \tau \otimes \wh \tau).
\end{align*}
Hence, 
$$
\varepsilon ( \partial (\tau \otimes \wh \sigma) ) = 0.
$$
Thus,
$$
\varepsilon: [\Delta K \otimes_{\Z(K^{\op})} \Delta K^{-*}] \to \ul\Z^{DK^{\op}}
$$
is a chain map.

\section{The dual cell decomposition} \label{sec:dual_cell}

A regular CW-complex is a CW-complex where the closure of each open $n$-cell is homeomorphic to an $n$-disk.   We denote a regular CW-complex $X$ by a pair $(|X|, \{\sigma^n\})$ where $|X|$ is a topological space and each $\sigma^n$ is a (closed) $n$-cell in $|X|$, and   $(|X|, \{\sigma^n\})$ is a subdivision of $(|X|, \{\tau^n\})$ if every cell of the former is a subset of a cell of the latter:   $\sigma^n \subset \tau^n$.   

Regular CW-complexes are closely related to simplicial complexes.   A simplicial complex is a regular CW-complex (with simplices as cells) and every regular CW-complex has a simplicial subdivision.

Let $K = (|K|, \{\sigma\})$ be a simplicial complex.   We will define the dual cell subdivision $DK = (|K|, \{D_\tau \sigma\})$;  its poset of cells will be the category $DK$ mentioned earlier.   The dual cell decomposition  is a regular CW-complex intermediary between $K$ and its barycentric subdivision $K'$.    The utility of the dual cell decomposition is that it is useful for the proof that the category $\ch(\Z (K^{\op})\Mod)$ admits a  chain duality.

We will first remind the reader of the barycentric subdivision $$K' = (|K|, \{ \ol \sigma_0 \ol \sigma_1 \dots \ol \sigma_n\}).$$    Here the $\sigma_i$ are simplices of $K$ satisfying $\sigma_0 < \sigma_1 < \dots < \sigma_n$, $\ol \sigma$ is the barycenter of $\sigma$, and $ \ol \sigma_0 \ol \sigma_1 \dots \ol \sigma_n$ is the convex hull of these barycenters.   

For $\sigma \leq \tau \in K$, the dual cell $D_{\tau} \sigma$ is the union of all simplices $\ol{\sigma}_0 \ol{\sigma}_1 \dots \ol{\sigma}_r$ of the barycentric subdivision so that $\sigma \leq \sigma_0 < \sigma_1 < \dots < \sigma_r \leq \tau$.

\begin{lemma} \label{dual cells are cells}
For $\sigma \leq \tau \in K$, the dual cell $D_{\tau} \sigma \subset |K|$ is homeomorphic to a disk $D^{\dim \tau - \dim \sigma}$.
\end{lemma}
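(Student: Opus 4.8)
The plan is to identify $D_\tau\sigma$ with the geometric realization of an order complex and then show that this order complex is a combinatorial ball. Concretely, by definition $D_\tau\sigma = |N(\sigma,\tau)|$ where $N(\sigma,\tau)$ is the subposet of $K$ consisting of all simplices $\rho$ with $\sigma \leq \rho \leq \tau$, and a simplex of the subdivision lying in $D_\tau\sigma$ is exactly a chain $\sigma \leq \sigma_0 < \cdots < \sigma_r \leq \tau$, i.e. a simplex of the order complex $\Delta N(\sigma,\tau)$. So the first step is to record the homeomorphism $D_\tau\sigma \cong |\Delta N(\sigma,\tau)|$. The second step is the key geometric input: the closed star of $\sigma$ in $K'$, namely $|\Delta\{\rho : \sigma \leq \rho\}|$ intersected appropriately, is a cone on $D_K\sigma$ with cone point $\ol\sigma$; more precisely, for $\sigma \leq \tau$ the subdivision of $\tau$ restricted to the ``upper'' part gives that $D_\tau\sigma$ is the join $\ol\sigma * L$, where $L = |\Delta\{\rho : \sigma < \rho \leq \tau\}|$ is the link-type piece. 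Thus $D_\tau\sigma$ is a cone, hence at least contractible; but we need more — we need it to be a PL ball of the correct dimension.

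The cleanest route is by induction on $\dim\tau - \dim\sigma$. When $\sigma = \tau$, $D_\tau\sigma$ is the single point $\ol\sigma$, a $0$-disk. For the inductive step, write $D_\tau\sigma = \ol\sigma * \partial_\tau(\sigma)$ where $\partial_\tau(\sigma) := \bigcup_{\sigma < \rho \leq \tau} D_\tau\rho$ is the union of the dual cells $D_\tau\rho$ for $\rho$ a proper coface of $\sigma$ inside $\tau$. Each such $D_\tau\rho$ is, by the inductive hypothesis, a disk of dimension $\dim\tau - \dim\rho \leq \dim\tau - \dim\sigma - 1$, and these fit together along lower-dimensional dual cells $D_\tau(\rho\cup\rho')$. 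One then argues that $\partial_\tau(\sigma)$ is a PL sphere of dimension $\dim\tau - \dim\sigma - 1$: it is precisely the boundary complex structure coming from the fact that, inside the simplex $\tau$, the simplices $\rho$ with $\sigma < \rho \leq \tau$ biject (via $\rho \mapsto \rho \setminus \sigma$, thinking of vertices) with the nonempty faces of the $(\dim\tau - \dim\sigma - 1)$-simplex spanned by the vertices of $\tau$ not in $\sigma$, and the dual-cell incidence relations match those of the dual (barycentric) subdivision of the boundary of that simplex. Since the join of a point with a PL $(k-1)$-sphere is a PL $k$-disk, the induction closes and $D_\tau\sigma \cong D^{\dim\tau - \dim\sigma}$.

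The main obstacle is making precise and rigorous the claim that $\partial_\tau(\sigma)$ is a sphere of the right dimension — that is, controlling how the constituent dual cells $\{D_\tau\rho : \sigma < \rho \leq \tau\}$ glue. The honest way to handle it is to observe that the whole discussion is local to the simplex $\tau$: restricting everything to $\tau$, the poset $\{\rho : \sigma \leq \rho \leq \tau\}$ is isomorphic to the face poset of a simplex $\delta$ of dimension $\dim\tau - \dim\sigma$ (with $\sigma \leftrightarrow$ a distinguished vertex, $\tau \leftrightarrow \delta$ itself), and $D_\tau\sigma$ becomes the dual cell of a vertex in the dual-cell subdivision of a single simplex $\delta$. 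For a single simplex, the dual cell of a vertex is manifestly a ball: $\delta'$ (barycentric subdivision of $\delta$) realizes $\delta$ as a regular CW complex and the dual cell of the vertex is the union of the closed stars arithmetic, which is standard PL topology — e.g. it is the image of the vertex's barycentric neighborhood, a ball. I would cite Spanier \cite{S66} or a standard PL reference for ``the dual cell of a $p$-simplex in an $n$-simplex is an $(n-p)$-ball,'' and reduce the general statement to this via the local identification above. This keeps the proof short while being fully rigorous.
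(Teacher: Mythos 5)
Your overall strategy (exhibit $D_\tau\sigma$ as a cone with apex $\ol\sigma$ and identify the base) is the right one and is essentially what the paper does, but the decisive step contains an error. You claim that $\partial_\tau(\sigma)=\bigcup_{\sigma<\rho\leq\tau}D_\tau\rho$ is a PL \emph{sphere} of dimension $\dim\tau-\dim\sigma-1$. It is not: it is a ball. Already for $\tau$ a $2$-simplex and $\sigma$ a vertex, this union is the arc running from the barycenter of one edge through $\ol\tau$ to the barycenter of the other edge (the red set $L_\tau\sigma$ in Figure~\ref{fig:radial}), not a circle. The source of the error is that the poset $\{\rho:\sigma<\rho\leq\tau\}$ corresponds to \emph{all} nonempty faces of the opposite face $\sigma_C$ (including $\sigma_C$ itself, coming from $\rho=\tau$), so its order complex is the barycentric subdivision of $\sigma_C$ — a ball — not the subdivision of $\partial\sigma_C$. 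Relatedly, your notation $\partial_\tau(\sigma)$ is misleading: the actual boundary of $D_\tau\sigma$ also contains the cells $D_{\tau'}\sigma$ for $\sigma\leq\tau'<\tau$. The argument is repairable, since the cone on a $(k-1)$-ball is still a $k$-ball, and indeed the identification $L_\tau\sigma\cong\sigma_C'$ gives a clean proof; but as written the key claim is false and the induction you build on it is not needed.

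Your second route — reduce to the dual cell of a vertex in a single simplex and cite "standard PL topology" — is a valid normalization (the order complex of the interval $[\sigma,\tau]$ depends only on the Boolean poset), but it leaves the crux unproved: the assertion that this dual cell is a ball is exactly the content of the lemma, and your justification ("the image of the vertex's barycentric neighborhood") is not an argument. This is precisely the gap the paper fills: it shows $D_\tau\sigma=\ol\sigma*L_\tau\sigma$ and then produces an explicit homeomorphism to the cone $\ol\sigma*\sigma_C$ on the opposite face by a radial argument (commensurate proper cones over $\ol\sigma$ are homeomorphic), which also justifies that $D_\tau\sigma$ really is a cone over $\ol\sigma$ in the geometric sense. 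If you want a complete proof, either carry out that radial argument or prove directly that $L_\tau\sigma$ is simplicially isomorphic to the barycentric subdivision of $\sigma_C$ and take the cone.
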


\begin{proof}
Please refer to the Figure \ref{fig:radial}  while reading the proof.    Here  $\tau$ is a 2-simplex and $\sigma$ is the lower left vertex.
\begin{figure}[ht!] 
\centering
 \begin{tikzpicture}[x = 1cm, y = 1cm]
\draw (0,0) -- (60:4);
\draw[blue] (60:4) -- (4,0);
\draw(4,0) -- (0,0);
\draw[red] (60:2) -- (2,{(2/3)*sqrt(3)});
\draw[red] (2,0) -- (2,{(2/3)*sqrt(3)});
\node (a) at (0,0) {$\bullet$};
\node (b) at  (1,.5) {$D_\tau \sigma$};
\node (c) at (-.3,-.3) {$\ol\sigma=\sigma$};
\node (d) at (2,2.4) {$\tau$};
\end{tikzpicture}
\caption{$\sigma_C$ (blue) and $L_{\tau} \sigma$ (red) }
\label{fig:radial}
\end{figure}
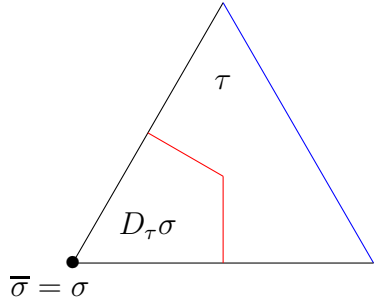

Let $\sigma_C$ be the subset of $\tau$ spanned by the vertices of $\tau$ which are not vertices of $\sigma$ (in the example above $\sigma_C$ is the blue line segment).   Then $\sigma_C$ is a simplex of dimension $\dim \tau - \dim \sigma - 1$.   Let $\ol \sigma * \sigma_C$ be the union of all line segments between $\ol \sigma$ and a point in $\sigma_C$.    Then $\ol \sigma * \sigma_C$ is homeomorphic to the cone on $\sigma_C$, and hence is homeomorphic to $D^{\dim \tau - \dim \sigma}$.

  Let $L_\tau \sigma$ be the union of all simplices $\ol \sigma_0 \ol \sigma_1 \dots \ol \sigma_r$ of the barycentric subdivision so that $\sigma < \sigma_0 < \sigma_1 < \dots < \sigma_r \leq \tau$ (in the example above $L_\tau \sigma$ is in red).

A radial argument using line segments starting at $\ol \sigma$ shows that  $D_\tau \sigma=\ol \sigma *L_\tau \sigma$ is homeomorphic to  $\ol \sigma * \sigma_C$.   

Here are some details.   Embed $\tau$ isometrically into $\R^n$ with $\ol \sigma$ mapping to the origin.   
For $L \subset \R^n$, the {\em cone on $L$} is $$\cone(L) = \{ t\bfx \mid t \in [0,1], \bfx \in L\},$$   where $\cone(L)$ is a {\em proper cone}  if $L$ is compact, ${\bf 0} \not \in L$, and  every ray starting at the origin intersects $L$ in at most one point.    Let $S(L) = \{\bfx/\|\bfx\| \mid \bfx \in L\}$.  Two proper cones $\cone(L)$ and $\cone(M)$ are {\em commensurate} if every ray starting at the origin intersects $L$ if and only if it intersects $M$, equivalently, $S(L) = S(M)$.  As an example, If $\sigma$ is placed at the origin, $D_\tau \sigma = \cone(L_\tau \sigma)$ and $\cone(\sigma_C)$ are commensurate.

We claim that commensurate cones are homeomorphic.   Using compactness of $L$ one shows that  map $L \to S(L), \bfx \mapsto \bfx/\|\bfx\|$ is a homeomorphism, that the maps $L \times [0,1] \to \cone(L), (\bfx,t) \mapsto t\bfx$ and 
$L \times [0,1] \to \cone(S(L)), (\bfx,t) \mapsto t\bfx/\|\bfx\|$ are quotient maps, and that the induced map $\cone(L) \to \cone(S(L))$ is a homeomorphism.   
\end{proof}

\begin{lemma}
If $K$ is a finite simplicial complex, then $\left(|K|, \{ D_\tau \sigma \}_{\sigma \leq \tau \in K} \right)$ is a regular CW-complex.
\end{lemma}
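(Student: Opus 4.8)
The plan is to verify directly the axioms of a finite regular CW complex, using the barycentric subdivision $K'$ as a common refinement. First note that $K'$ is a finite simplicial (hence regular CW) complex with $|K'|=|K|$, and that for $\sigma\le\tau$ the dual cell $D_\tau\sigma$ is a subcomplex of $K'$; indeed $D_\tau\sigma$ is precisely the order complex $\Delta([\sigma,\tau])$ of the closed interval $[\sigma,\tau]=\{\mu\in K:\sigma\le\mu\le\tau\}$ in the simplex poset. Since the open interval $(\sigma,\tau)$ is the poset of proper nonempty subsets of the set of $d(\sigma,\tau):=(\dim\tau-\dim\sigma)$ vertices of $\tau$ not lying in $\sigma$ — so $\Delta((\sigma,\tau))$ is the barycentric subdivision of $\partial\Delta^{d(\sigma,\tau)-1}$, a PL $(d(\sigma,\tau)-2)$-sphere — one gets, for $\sigma<\tau$, that $D_\tau\sigma=\overline\sigma\ast\overline\tau\ast\Delta((\sigma,\tau))\cong\Delta^1\ast S^{d(\sigma,\tau)-2}$, a PL $d(\sigma,\tau)$-ball (the case $\sigma=\tau$ being the $0$-ball $\{\overline\sigma\}$). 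This recovers Lemma~\ref{dual cells are cells}, now with PL control. Write $\mathring D_\tau\sigma$ and $\partial D_\tau\sigma$ for the interior and the bounding sphere of this ball. It then suffices to prove: (i) the interiors $\{\mathring D_\tau\sigma\}_{\sigma\le\tau}$ partition $|K|$; and (ii) $\partial D_\tau\sigma=\bigcup\{D_{\tau'}\sigma':\sigma\le\sigma'\le\tau'\le\tau,\ (\sigma',\tau')\ne(\sigma,\tau)\}$, a union of dual cells of strictly smaller dimension. Given (i) and (ii) the CW axioms follow at once: the characteristic map of the cell $D_\tau\sigma$ is $D^{d(\sigma,\tau)}\xrightarrow{\cong}D_\tau\sigma\hookrightarrow|K|$, a homeomorphism onto its closed image — so the structure is regular — restricting to a homeomorphism of open cells by (i) and carrying the bounding sphere into the $(d(\sigma,\tau)-1)$-skeleton by (ii); closure-finiteness and the weak topology are automatic because $K$ is finite.

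Both (i) and (ii) will come out of one local computation. For a chain $C=(\rho_0<\dots<\rho_k)$ in $K$, the corresponding open $k$-simplex $\langle C\rangle$ of $K'$ lies in $D_\tau\sigma=\Delta([\sigma,\tau])$ exactly when $\sigma\le\rho_0$ and $\rho_k\le\tau$, and then the standard link formula for order complexes gives a join decomposition
$$
\link\bigl(\langle C\rangle,\,D_\tau\sigma\bigr)\ \cong\ \Delta[\sigma,\rho_0)\ast\Delta(\rho_0,\rho_1)\ast\cdots\ast\Delta(\rho_{k-1},\rho_k)\ast\Delta(\rho_k,\tau],
$$
the factors being order complexes of intervals of the simplex poset. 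Each open-interval factor $\Delta(\rho_j,\rho_{j+1})$ is, as above, a PL sphere of dimension $\dim\rho_{j+1}-\dim\rho_j-2$, while the half-open factors $\Delta[\sigma,\rho_0)$ and $\Delta(\rho_k,\tau]$ are cones on such spheres, hence PL balls, and they are nonempty precisely when $\rho_0>\sigma$, resp.\ $\rho_k<\tau$. Using $S^a\ast S^b\cong S^{a+b+1}$ and that a join with a ball factor is a ball, one finds the link is a PL sphere $S^{\,d(\sigma,\tau)-k-1}$ when $\rho_0=\sigma$ and $\rho_k=\tau$, and a PL ball otherwise. Since in a PL $d$-ball a point in the relative interior of a $k$-simplex is an interior point exactly when its link is an $S^{d-k-1}$, and a boundary point exactly when its link is a $D^{d-k-1}$, we obtain the dichotomy: an open simplex $\langle C\rangle\subseteq D_\tau\sigma$ lies in $\mathring D_\tau\sigma$ when $\min C=\sigma$ and $\max C=\tau$, and in $\partial D_\tau\sigma$ in every other case.

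Claim (i) is then immediate: every point of $|K|=|K'|$ lies in a unique open simplex $\langle C\rangle$ of $K'$, hence in $\mathring D_\tau\sigma$ with $\sigma=\min C$, $\tau=\max C$, and in no other such interior by the dichotomy. For (ii): the dichotomy partitions the open simplices of the subcomplex $D_\tau\sigma$ into those making up $\mathring D_\tau\sigma$ (the $\langle C\rangle$ with $(\min C,\max C)=(\sigma,\tau)$) and those making up $\partial D_\tau\sigma$; regrouping the latter by $(\min C,\max C)=(\sigma',\tau')$, and noting that each $D_{\tau'}\sigma'$ with $\sigma\le\sigma'\le\tau'\le\tau$, $(\sigma',\tau')\ne(\sigma,\tau)$, is entirely contained in $\partial D_\tau\sigma$, identifies $\partial D_\tau\sigma$ with the stated union; each such $D_{\tau'}\sigma'$ has dimension $d(\sigma',\tau')<d(\sigma,\tau)$. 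Along the way one sees that the face poset of the resulting regular CW structure on $|K|$ is exactly $DK$.

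The step I expect to be the main obstacle is the link computation: setting up the join decomposition of $\link(\langle C\rangle,D_\tau\sigma)$, tracking the dimensions of the subdivided boundary-of-simplex spheres through the iterated join, and invoking the right PL criterion separating interior from boundary points of a PL ball. Everything downstream is bookkeeping. Should one prefer to avoid PL machinery, the same dichotomy can instead be obtained by induction on $d(\sigma,\tau)$, using the cone decomposition $D_\tau\sigma=\overline\sigma\ast L_\tau\sigma$ from the proof of Lemma~\ref{dual cells are cells} together with the elementary identity $\partial(\overline\sigma\ast L)=L\cup(\overline\sigma\ast\partial L)$ for a ball $L$; but identifying $L_\tau\sigma$ and $\partial L_\tau\sigma$ with unions of dual cells at each step is the fiddlier of the two routes.
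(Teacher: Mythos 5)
Your proof is correct, and it supplies precisely the two verifications that the paper's own proof omits. The paper's argument consists of recalling that a finite regular CW-complex is a compact Hausdorff space covered by finitely many closed cells whose interiors partition the space and whose boundaries are unions of closed cells, invoking Lemma \ref{dual cells are cells} for the disk structure, and then simply \emph{asserting} the boundary formula
$\partial D_\tau \sigma = \bigl(\bigcup_{\sigma < \sigma' \leq \tau}  D_\tau \sigma' \bigr) \cup  \bigl( \bigcup_{\sigma \leq \tau' < \tau}  D_{\tau'} \sigma \bigr)$;
neither the partition property nor this formula is actually proved there. Your union over all pairs $(\sigma',\tau')\neq(\sigma,\tau)$ with $\sigma\le\sigma'\le\tau'\le\tau$ agrees with the paper's, since $D_{\tau'}\sigma'\subseteq D_{\tau}\sigma'$ absorbs the extra terms. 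What you do differently is to identify $D_\tau\sigma$ with the order complex of the interval $[\sigma,\tau]$ and to establish the interior/boundary dichotomy for the open simplices of $K'$ via the join decomposition of links in an order complex together with PL recognition of balls versus spheres; the setup is correct (the open-interval factors are subdivided boundaries of simplices, hence spheres; the half-open factors are cones, hence balls, nonempty exactly when $\min C>\sigma$ or $\max C<\tau$; and the dimension count $d(\sigma,\tau)-k-1$ is right), and both the partition of $|K|$ by open dual cells and the boundary formula then fall out by sorting open simplices of $K'$ according to $(\min C,\max C)$. What your route buys is a complete and checkable proof, plus the identification of the face poset of $DK$ as a regular CW-complex with the poset $DK$ used later in the paper; what it costs is reliance on PL machinery (invariance of the boundary of a PL ball detected by links), which the paper's elementary radial-cone argument for Lemma \ref{dual cells are cells} deliberately avoids. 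Your proposed alternative induction on $d(\sigma,\tau)$ using $D_\tau\sigma=\ol\sigma * L_\tau\sigma$ and $\partial(\ol\sigma * L)=L\cup(\ol\sigma * \partial L)$ would stay closer in spirit to the paper.
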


\begin{proof}
A finite regular CW-complex is a compact Hausdorff space covered by a finite collection of closed cells, whose interiors partition the space, and so that the boundary of each cell is a union of closed cells.    These conditions are satisfied, since
$$
\partial D_\tau \sigma = \left(\bigcup_{\sigma < \sigma' \leq \tau}  D_\tau \sigma' \right) \cup  \left( \bigcup_{\sigma \leq \tau' < \tau}  D_{\tau'} \sigma \right).
$$
\end{proof}

  For a regular CW-complex $Z$, one can define a $Z$-dissection and a $Z^{\op}$-dissection, as in Definition \ref{def_dissection}.    Similar to Example \ref{K_dissected_by_K}, the regular CW-complex $DK$ has a $DK$-dissection.   Thus the chain complex $C(DK)$ is a $\Z(DK)$-chain complex.

An orientation on an $n$-cell $\sigma$ of a regular $CW$-complex is a choice of generator of the infinite cyclic group $H_n(\sigma, \partial \sigma)$.    We  require that all 0-cells are positively oriented, in other words the generator is the unique singular 0-chain with image the 0-cell.  An oriented regular $CW$-complex is a CW-complex with an orientation for each cell.   We will abuse notation and, in an oriented CW-complex $X$, use $\sigma$ to denote an $n$-cell, the generator of  $H_n(\sigma, \partial \sigma)$, and the image element in $C_n(X)$. The cellular chains of an oriented regular $CW$-complex are based.   In a regular $CW$-complex we will use the notation $\sigma' \prec \sigma$ to mean that $\sigma'$ is a codimension one face of   $\sigma$, that is, $\sigma'$ and $\sigma$ are both cells, $\sigma' \subset \partial \sigma$, and $1 + \dim \sigma' = \dim \sigma$.  If $(X, \{\sigma^n\})$ is a oriented regular $CW$-complex, and, in the cellular chain complex $\partial \sigma = \sum_{\sigma' \prec \sigma} [\sigma: \sigma'] \sigma'$, then the coefficients $[\sigma : \sigma']$ are called incidence numbers; note that $[\sigma: \sigma'] = \pm 1$.

We wish to orient the dual cell decomposition $DK$ of $|K|$ with an orientation which only depends on the orientation of $K$.   To quote William Massey  (see page 243 of \cite{Ma91}): ``we can specify orientations for the cells of a regular $CW$-complex by specifying a set of incidence numbers for the complex.   This is one of the most convenient ways of specifying orientations of cells."
The following theorem is essentially Theorem 7.2, Chapter IX of \cite{Ma91}:

\begin{theorem} \label{Massey}
 Let $X$ be a regular CW-complex.   For each pair $(\tau^n,\sigma^{n-1})$ consisting of an $n$-cell and an $(n-1)$-cell of $X$, let there be given an integer $\alpha^n_{\tau\sigma} = 0$ or $\pm 1$ such that the following four conditions hold:
 \begin{enumerate}[(1)]
\item If $\sigma^{n-1}$ is not a face of $\tau^n$, then $\alpha^n_{\tau\sigma} = 0$.
\item If $\sigma^{n-1}$ is  a face of $\tau^n$, then $\alpha^n_{\tau\sigma} = \pm 1$.
\item If $\sigma^0$ and $\rho^0$ are the two vertices of the 1-cell $\tau^1$, then
$$
\alpha^1_{\tau\sigma} + \alpha^1_{\tau\rho} = 0.
$$
\item Let $\tau^n$ and $\sigma^{n-2}$ be cells of $X$ so that $\sigma^{n-2} \subset \tau^n$; let $\rho^{n-1}$ and $\nu^{n-1}$ be the unique $(n-1)$-cells so that
$\sigma^{n-2} \subset \rho^{n-1}  \subset \tau^n$ and $\sigma^{n-2} \subset   \nu^{n-1} \subset \tau^n$. 
Then
$$
\alpha^n_{\tau\rho} \alpha^{n-1}_{\rho\sigma} + \alpha^n_{\tau\nu} \alpha^{n-1}_{\nu\sigma} =0.
$$
\end{enumerate}
Under these assumptions,  there exists a unique choice of orientations for the cells so that in the cellular chain complex $C(X)$
$$
\partial \tau^n = \sum_{\sigma^{n-1} \prec {\tau}^n} \alpha^n_{\tau\sigma}  \sigma^{n-1}.
$$
\end{theorem}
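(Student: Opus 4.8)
The plan is to establish existence and uniqueness together by induction on the dimension $n$, orienting all $n$-cells once every cell of dimension $<n$ has been consistently oriented. The $0$-cells carry the positive orientation by fiat, so there is nothing to choose there. For a $1$-cell $\tau^1$ with vertices $\sigma^0,\rho^0$, the connecting map $\partial\colon H_1(\tau^1,\partial\tau^1)\to H_0(\partial\tau^1)$ is injective with image $\widetilde H_0(\partial\tau^1)=\Z\cdot(\sigma^0-\rho^0)$; conditions (2) and (3) make $\alpha^1_{\tau\sigma}\sigma^0+\alpha^1_{\tau\rho}\rho^0=\pm(\sigma^0-\rho^0)$ a generator, so exactly one generator of $H_1(\tau^1,\partial\tau^1)$ maps onto it, and that is the forced orientation of $\tau^1$.

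For the inductive step I would fix $n\geq 2$, assume all cells of dimension $<n$ are oriented so that the displayed boundary formula holds below dimension $n$, and fix an $n$-cell $\tau^n$. Set
$$
z_\tau \;=\; \sum_{\sigma^{n-1}\prec\tau^n}\alpha^n_{\tau\sigma}\,\sigma^{n-1}\ \in\ C_{n-1}(X).
$$
Since $\tau^n$ is a regular cell, its boundary $\partial\tau^n$ is a subcomplex of $X$ homeomorphic to $S^{n-1}$ and $z_\tau$ lies in $C_{n-1}(\partial\tau^n)$. The first task is to check $z_\tau$ is a cycle: expanding $\partial z_\tau$ by the inductive formula $\partial\sigma^{n-1}=\sum_{\rho^{n-2}\prec\sigma^{n-1}}\alpha^{n-1}_{\sigma\rho}\rho^{n-2}$, every $(n-2)$-cell occurring is a face of $\tau^n$, and the coefficient of such a $\rho^{n-2}$ is $\sum_{\rho\prec\sigma\prec\tau}\alpha^n_{\tau\sigma}\alpha^{n-1}_{\sigma\rho}$, a sum over the two $(n-1)$-faces of $\tau^n$ containing $\rho^{n-2}$; condition (4) says exactly that these two terms cancel. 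Hence $\partial z_\tau=0$. As $\partial\tau^n$ has no $n$-cells, $Z_{n-1}(C_\bullet(\partial\tau^n))=H_{n-1}(\partial\tau^n)\cong\Z$, so $z_\tau$ is an integer multiple of a generator; some coefficient $\alpha^n_{\tau\sigma}$ equals $\pm1$ by condition (2), forcing the multiple to be $\pm1$, so $z_\tau$ generates $H_{n-1}(\partial\tau^n)$.

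Finally, the long exact sequence of $(\tau^n,\partial\tau^n)$ and contractibility of the ball $\tau^n$ make $\partial\colon H_n(\tau^n,\partial\tau^n)\xrightarrow{\ \cong\ }H_{n-1}(\partial\tau^n)$ an isomorphism of infinite cyclic groups, so I would orient $\tau^n$ by declaring its orientation to be the unique generator of $H_n(\tau^n,\partial\tau^n)\subset C_n(X)$ that maps onto $z_\tau$. Under the standard identification of the cellular boundary $\partial_n$ with this connecting map (followed by $H_{n-1}(\partial\tau^n)\hookrightarrow H_{n-1}(X^{n-1})\to H_{n-1}(X^{n-1},X^{n-2})=C_{n-1}(X)$, which carries the class $z_\tau$ to the chain $z_\tau$), this gives $\partial_n\tau^n=\sum_{\sigma\prec\tau}\alpha^n_{\tau\sigma}\sigma^{n-1}$. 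Uniqueness is automatic: at each stage $z_\tau$ depends only on the prescribed incidence numbers and the already-unique orientations in lower dimensions, and the generator mapping to it is unique. I expect the only real work to be the sign bookkeeping in the cycle computation — verifying that condition (4) is precisely the relation that kills each coefficient of $\partial z_\tau$ — together with the routine homological identifications (cellular homology of $\partial\tau^n\cong S^{n-1}$, and the cellular differential as a connecting homomorphism), which can be quoted from \cite{Ma91}.
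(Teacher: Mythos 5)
Your proof is correct, and in fact supplies an argument the paper itself omits: the paper simply quotes this statement from Massey (Theorem 7.2, Chapter IX of \cite{Ma91}) without proof. Your induction on dimension --- using condition (4) to show $z_\tau=\sum_{\sigma\prec\tau}\alpha^n_{\tau\sigma}\sigma$ is a cycle on the boundary sphere $\partial\tau^n\cong S^{n-1}$, condition (2) to see it is a generator of $H_{n-1}(\partial\tau^n)=Z_{n-1}(C(\partial\tau^n))$, and the connecting isomorphism $H_n(\tau^n,\partial\tau^n)\cong H_{n-1}(\partial\tau^n)$ to pin down the orientation --- is essentially Massey's own argument, so there is nothing to add beyond the routine cellular-homology identifications you already flag.
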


Thus the $\alpha$'s not only determine orientations on all the cells, they identify the cellular chain complex $C(X)$.

\begin{theorem}
 Let $K$ be a finite oriented simplicial complex.   There is an orientation on the regular CW-complex $DK$ and an isomorphism of $\Z(DK)$-complexes
\begin{align*}
    C(DK) & \cong \Delta K^{-*} \otimes_{\Z(K)} \Delta K  \\
        D_\tau \sigma & \mapsto \wh \sigma \otimes \tau
      \end{align*}
\end{theorem}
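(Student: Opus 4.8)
The plan is to produce the orientation on $DK$ via Massey's theorem (Theorem \ref{Massey}): we prescribe candidate incidence numbers $\alpha$ for the regular CW-complex $DK$, check that they satisfy the four axioms, and simultaneously arrange that the resulting boundary formula for $C(DK)$ matches the differential on $\Delta K^{-*} \otimes_{\Z(K)} \Delta K$ under the bijection $D_\tau\sigma \leftrightarrow \wh\sigma\otimes\tau$. Recall from the previous lemma that the codimension-one faces of $D_\tau\sigma$ are exactly the cells $D_\tau\sigma'$ with $\sigma \prec \sigma' \leq \tau$ (``the $\sigma$ gets bigger'') together with the cells $D_{\tau'}\sigma$ with $\sigma \leq \tau' \prec \tau$ (``the $\tau$ gets smaller''). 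On the algebraic side, from the formulas already recorded in the excerpt,
\[
d(\wh\sigma\otimes\tau) = \sum_{\sigma \prec \sigma' \leq \tau} \pm |\sigma':\sigma|\,\wh{\sigma'}\otimes\tau \;+\; \sum_{\sigma \leq \tau' \prec \tau} \pm |\tau:\tau'|\,\wh\sigma\otimes\tau',
\]
with signs dictated by the tensor and dual-complex differential conventions fixed in Lemma \ref{rules} and the definition of the round tensor product. So the natural choice is to \emph{define} $\alpha^n_{(D_\tau\sigma)(D_\tau\sigma')} = \pm|\sigma':\sigma|$ and $\alpha^n_{(D_\tau\sigma)(D_{\tau'}\sigma)} = \pm|\tau:\tau'|$ (with the explicit signs read off from the above), and $0$ for pairs of cells with no face relation.

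The key steps, in order: (1) verify that this $\alpha$ takes values in $\{0,\pm1\}$ and vanishes exactly when the face relation fails — immediate, since simplicial incidence numbers are $\pm1$. (2) Check axiom (3): a $1$-cell of $DK$ is either $D_\tau\sigma$ with $\dim\tau = \dim\sigma + 1$, whose two endpoints are $D_\tau\tau = \{\ol\tau\}$ and $D_\sigma\sigma=\{\ol\sigma\}$; here the two $\alpha$'s are the coefficients of $\wh\tau\otimes\tau$ and $\wh\sigma\otimes\sigma$ in $d(\wh\sigma\otimes\tau)$, which the excerpt already computed to sum to zero. (3) Check axiom (4), the $2$-dimensional cocycle condition: given $D_{\tau}\sigma \supset D_{\tau''}\sigma''$ a codimension-two inclusion, there are exactly two intermediate codimension-one cells, and one must enumerate the possible ``shapes'' — both steps enlarge $\sigma$; both steps shrink $\tau$; or one of each — and in each case the identity $\alpha\alpha + \alpha\alpha = 0$ reduces to $d^2 = 0$ on $\Delta K^{-*}\otimes_{\Z(K)}\Delta K$, i.e. to the corresponding $d^2=0$ identities for $\Delta K$ and $\Delta K^{-*}$ together with the Koszul sign bookkeeping of the tensor product. (4) Invoke Theorem \ref{Massey} to get the orientation and the boundary formula; (5) observe that the boundary formula is, by construction, precisely the differential of $\Delta K^{-*}\otimes_{\Z(K)}\Delta K$, and that the bijection $D_\tau\sigma \mapsto \wh\sigma\otimes\tau$ respects the $\Z(DK)$-grading (the cell $D_\tau\sigma$ sits over the object $(\sigma\leq\tau)$ of the poset $DK$, matching the summand $\Delta K^{-*}(\sigma)\otimes\Delta K(\tau)$); hence it is an isomorphism of $\Z(DK)$-chain complexes.

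The main obstacle is step (3)–(4): getting all the signs to line up simultaneously. Massey's axioms are sign conditions on a fixed set of incidence numbers, while the algebraic differential carries Koszul signs depending on degrees $|\sigma|$, $|\tau|$; one must choose the overall sign in the definition of $\alpha$ so that \emph{both} axiom (3) \emph{and} axiom (4) hold, and then separately confirm this same choice reproduces the tensor-product differential rather than its negative or a shifted variant. I expect this to be a somewhat delicate but ultimately mechanical case analysis — the ``mixed'' case of axiom (4) (one face relation enlarges $\sigma$, the other shrinks $\tau$) is where the cross-term Koszul sign $(-1)^{|\sigma|}$ interacts with the two simplicial incidence signs, and that is the calculation to do carefully. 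Everything else is bookkeeping, and the structural input (which cells are codimension-one faces of which, and that $\Delta K^{-*}\otimes_{\Z(K)}\Delta K$ is $\Z(DK)$-based on the $\wh\sigma\otimes\tau$) has already been established earlier in the paper.
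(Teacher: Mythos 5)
Your proposal matches the paper's proof essentially step for step: the paper also defines the incidence numbers $\alpha$ to be the coefficients appearing in $\partial(\wh\sigma\otimes\tau)$ (explicitly $(-1)^{|\sigma|+1}[\sigma':\sigma]$ and $(-1)^{|\sigma|}[\tau:\tau']$), verifies Massey's conditions (1)--(3) directly, and disposes of condition (4) exactly as you propose, by noting it is equivalent to $d^2=0$ in $\Delta K^{-*}\otimes_{\Z(K)}\Delta K$. Your worry about the overall sign is moot since $\alpha$ is \emph{defined} from the algebraic differential, so the boundary formula produced by Massey's theorem agrees with it by construction.
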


\begin{proof}  We have
$$
\partial (\wh \sigma \otimes \tau) =    (-1)^{|\sigma|+1} 
 \sum_{\sigma \prec \sigma' \leq \tau} [\sigma':\sigma]  \wh \sigma' \otimes \tau + (-1)^{|\sigma|} \sum_{\sigma \leq \tau' \prec \tau}  [\tau:\tau']     \wh \sigma \otimes \tau'
$$
Thus if $\sigma \prec \sigma' \leq \tau$, define $\alpha^n_{D_{\tau}\sigma D_{\tau}\sigma'} = (-1)^{|\sigma|+1} [\sigma': \sigma] $ and if $\sigma \leq \tau' \prec \tau$,  $\alpha^n_{{D_{\tau}\sigma}D_{\tau'}\sigma} = (-1)^{|\sigma|}[\tau: \tau'] $.   

Finally, we should check that the $\alpha$'s satisfy the hypothesis of Theorem \ref{Massey}.   There are two possibilities for codimension one faces of $D_{\tau} \sigma$, namely $D_{\tau} \sigma'$ where $\sigma \prec \sigma' \leq \tau$ and $D_{\tau'} \sigma$ where $\sigma \leq \tau' \prec \tau$.   In both cases, parts (1), (2), and (3) of Theorem \ref{Massey} are satisfied.    There are three possibilities for codimension 2 faces of $D_\tau \sigma$, namely  $D_{\tau} \sigma''$,  $D_{\tau''} \sigma$, and $D_{\tau'} \sigma'$, when $\sigma$ is a codimension two face of $\sigma''$,  when $\tau''$ is a codimension two face of $\tau$, and when $\sigma' \prec \sigma$ and $\tau' \prec \tau$. (4) is satisfied in each case since $\Delta K^{-*} \otimes_{\Z(K)} \Delta K $ is a chain complex.

This orients $DK$ and shows that the map is an isomorphism of chain complexes.  
\end{proof}

\begin{example} Using the assumptions in Theorem \ref{Massey}, figure \ref{fig:cell-orientations} shows the orientations of the dual cells in a 2-simplex.
      
   \begin{figure}[ht!]
\labellist
\small\hair 2pt
\endlabellist
\centering
\includegraphics[width=40mm, height= 35mm]{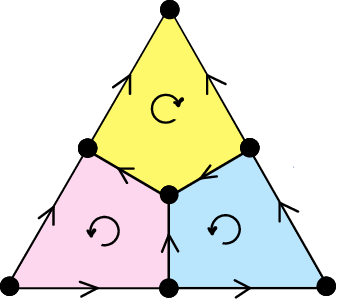}
\vspace{10pt}
\caption{Orientation of dual cells in a 2-simplex}
\label{fig:cell-orientations}
\end{figure}

\end{example}

\begin{corollary} \label{varepsilon is a weak eq}
For $\sigma \leq \tau \in K$, $H_i([\Delta K^{-*} \otimes_{\Z(K)} \Delta K])(\sigma
 \leq 
\tau)$ is zero for $i >0$ and is infinite cyclic for $i = 0$.   In fact $\varepsilon : [\Delta K^{-*} \otimes_{\Z(K)} \Delta K] \to \ul \Z^{DK}$ is a weak equivalence.
\end{corollary}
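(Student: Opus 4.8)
The plan is to reduce everything to the single geometric fact that dual cells are disks (Lemma \ref{dual cells are cells}), via the isomorphism $C(DK) \cong \Delta K^{-*} \otimes_{\Z(K)} \Delta K$ of $\Z(DK)$-chain complexes from the theorem just proved. Applying the full embedding $[~]\colon \Z(DK)\Mod \to \Z[DK]\Mod$ to that isomorphism yields an isomorphism of $\Z[DK]$-chain complexes $[\Delta K^{-*} \otimes_{\Z(K)} \Delta K] \cong [C(DK)]$, and I will argue that under this isomorphism the chain map $\varepsilon$ corresponds to the ordinary cellular augmentation $[C(DK)] \to \ul\Z^{DK}$. Since a weak equivalence is tested one object at a time, it then suffices to evaluate at an arbitrary object $(\sigma \leq \tau)$ of $DK$.

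The steps, in order, are as follows. First, as in Example \ref{augmentation} applied to the $DK$-dissection of the regular CW-complex $DK$, identify $[C(DK)](\sigma \leq \tau)$ with the cellular chain complex $C(D_\tau\sigma)$ of the closed dual cell $D_\tau\sigma \subseteq |K|$, whose cells are precisely the $D_{\rho'}\rho$ with $\sigma \leq \rho \leq \rho' \leq \tau$ (equivalently the objects $(\rho \leq \rho') \leq (\sigma \leq \tau)$ of $DK$). Second, under the bijection $D_{\rho'}\rho \leftrightarrow \wh\rho \otimes \rho'$, the defining formulas $\varepsilon(\wh\rho \otimes \rho) = 1$ and $\varepsilon(\wh\rho \otimes \rho') = 0$ for $\rho$ a proper face of $\rho'$ say exactly that $\varepsilon(\sigma \leq \tau)\colon C(D_\tau\sigma) \to \Z$ sends each $0$-cell (barycenter) of $D_\tau\sigma$ to $1$ and vanishes on cells of positive dimension, i.e.\ it is the standard augmentation. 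Third, by Lemma \ref{dual cells are cells} the space $D_\tau\sigma$ is homeomorphic to a disk of dimension $\dim\tau - \dim\sigma$, hence is nonempty and contractible; thus $C(D_\tau\sigma)$ is a finite complex of finitely generated free abelian groups whose homology is $\Z$ in degree $0$ and $0$ elsewhere, and the augmentation $C(D_\tau\sigma) \to \Z$ is a quasi-isomorphism, indeed (being a quasi-isomorphism of bounded-below complexes of free $\Z$-modules, cf.\ the $K = 1$ case of Proposition \ref{whe=che}) a chain homotopy equivalence. Fourth, conclude that $\varepsilon(\sigma \leq \tau)$ is a homology isomorphism for every object $(\sigma \leq \tau)$, so $\varepsilon$ is a weak equivalence; then $H_i([\Delta K^{-*} \otimes_{\Z(K)} \Delta K])(\sigma,\tau) \cong H_i(\ul\Z^{DK})(\sigma \leq \tau)$, which is infinite cyclic for $i = 0$ and zero for $i > 0$. (The analogous statement for $[\Delta K \otimes_{\Z(K^{\op})} \Delta K^{-*}] \to \ul\Z^{DK^{\op}}$ follows by the symmetry $DK \cong DK^{\op}$.)

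The only place requiring genuine care — the main obstacle — is the bookkeeping in the first two steps: verifying that the functor $[~]$ turns $C(DK)$, evaluated at $(\sigma \leq \tau)$, into the cellular chain complex of the closed cell $D_\tau\sigma$ with its induced regular CW-structure, and that the abstractly defined $\varepsilon$ matches the geometric augmentation on the nose (signs included) under the isomorphism of the preceding theorem. Once this identification is pinned down, the topology does all the remaining work: contractibility of a disk, with no spectral sequence or induction on the size of $K$ needed.
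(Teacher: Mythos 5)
Your proof is correct and follows essentially the same route as the paper: both reduce, via the isomorphism $C(DK)\cong \Delta K^{-*}\otimes_{\Z(K)}\Delta K$ from the preceding theorem, to evaluating at $(\sigma\leq\tau)$ and invoking Lemma \ref{dual cells are cells} that the dual cell $D_\tau\sigma$ is a disk, hence has the homology of a point. The only cosmetic difference is in the last step: the paper concludes from surjectivity of $\varepsilon(\sigma,\tau)$ in degree $0$, whereas you identify $\varepsilon(\sigma,\tau)$ outright with the standard cellular augmentation of $C(D_\tau\sigma)$; both are fine.
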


\begin{proof}
 The previous theorem shows that 
 $$[\Delta K^{-*} \otimes_{\Z(K)} \Delta K]
(\sigma \leq \tau)
 \cong C(DK)
(\sigma\leq \tau)
 = C(D_{\tau}\sigma).$$
Lemma \ref{dual cells are cells} shows that these chain complexes have the homology of a point.    Since
$\varepsilon(\sigma \leq \tau)$
 is onto the result follows.
\end{proof}

\begin{lemma}\label{iso-in-DK}  There is an isomorphism of chain complexes 
 \begin{align*}
 p : [\Delta K^{-*} \otimes_{\Z(K)} \Delta K]
(\sigma \leq \tau) 
& \to [\Delta K \otimes_{\Z(K^{\op})}  \Delta K^{-*}](
\tau \geq \sigma) \\
 \wh \sigma \otimes \tau & \mapsto   (-1)^{|\sigma||\tau|} \tau \otimes \wh \sigma.
 \end{align*}
\end{lemma}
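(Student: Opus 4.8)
The plan is to exhibit the stated assignment $\wh\sigma\otimes\tau\mapsto(-1)^{|\sigma||\tau|}\tau\otimes\wh\sigma$ as a genuine chain map and then observe that it is invertible. First I would unwind the two sides. By the round tensor product definitions, $[\Delta K^{-*}\otimes_{\Z(K)}\Delta K](\sigma,\tau)$ is the subquotient of $(\oplus\Delta K^{-*}(\rho))\otimes_\Z(\oplus\Delta K(\rho'))$ whose $(\rho,\rho')$-summand contributes exactly when $\rho\le\sigma$ and $\tau\le\rho'$; for the simplicial (co)chain complexes each $\Delta K^{-*}(\rho)$ and $\Delta K(\rho')$ is $\Z$ concentrated in degree $-|\rho|$ resp.\ $|\rho'|$, so $[\Delta K^{-*}\otimes_{\Z(K)}\Delta K](\sigma,\tau)$ is the based complex on generators $\wh\rho\otimes\rho'$ with $\rho\le\sigma\le\tau\le\rho'$ (equivalently, this is $C(D_\tau\sigma)$ by the theorem above). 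Dually, $[\Delta K\otimes_{\Z(K^{\op})}\Delta K^{-*}](\tau,\sigma)$ is the based complex on generators $\rho'\otimes\wh\rho$ with (in $K^{\op}$) $\tau\le_{K^{\op}}\rho'$ and $\sigma\le_{K^{\op}}\ \le\rho$, i.e.\ again $\rho\le\sigma\le\tau\le\rho'$. So the underlying based abelian groups are literally matched by $\wh\rho\otimes\rho'\leftrightarrow\rho'\otimes\wh\rho$ up to a sign, and the map $p$ is bijective on bases; invertibility will be automatic once $p$ is shown to be a chain map (its inverse being $\rho'\otimes\wh\rho\mapsto(-1)^{|\rho||\rho'|}\wh\rho\otimes\rho'$, since $(-1)^{2|\sigma||\tau|}=1$ on each bidegree).

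The substantive step is checking $p\circ d = d\circ p$. I would compute $\partial(\wh\sigma\otimes\tau)$ and $\partial(\tau\otimes\wh\sigma)$ from the Tot differential on $\Hom_{\bullet,\bullet}$ and $\otimes$ together with the sign $(-1)^{|x|}$ in the tensor differential and the $(-1)^{n+1}$ convention in $\Hom$; the two boundary formulas have already been recorded in Section \ref{sec:cat_point_of_view} (in the discussion of the augmentations $\varepsilon$) and in the proof of the theorem identifying $C(DK)$ with $\Delta K^{-*}\otimes_{\Z(K)}\Delta K$, namely
\begin{align*}
\partial(\wh\sigma\otimes\tau) &= (-1)^{|\sigma|+1}\!\!\sum_{\sigma\prec\sigma'\le\tau}\!\![\sigma':\sigma]\,\wh\sigma'\otimes\tau \;+\;(-1)^{|\sigma|}\!\!\sum_{\sigma\le\tau'\prec\tau}\!\![\tau:\tau']\,\wh\sigma\otimes\tau',\\
\partial(\tau\otimes\wh\sigma) &= \sum_{\sigma\le\tau'\prec\tau}[\tau:\tau']\,\tau'\otimes\wh\sigma \;+\;(-1)^{|\tau|}(-1)^{|\sigma|+1}\!\!\sum_{\sigma\prec\sigma'\le\tau}\!\![\sigma':\sigma]\,\tau\otimes\wh\sigma'.
\end{align*}
Applying $p$ to the first and comparing termwise with $\partial p(\wh\sigma\otimes\tau) = (-1)^{|\sigma||\tau|}\partial(\tau\otimes\wh\sigma)$, one must verify two sign identities: for the face $\sigma'$ with $|\sigma'| = |\sigma|+1$, that $(-1)^{|\sigma|+1}\cdot(-1)^{|\sigma'||\tau|} = (-1)^{|\sigma||\tau|}\cdot(-1)^{|\tau|}(-1)^{|\sigma|+1}$, which is the identity $(-1)^{|\sigma'||\tau|} = (-1)^{|\tau|}(-1)^{|\sigma||\tau|}$, true since $|\sigma'| = |\sigma|+1$; and for the coface $\tau'$ with $|\tau'| = |\tau|-1$, that $(-1)^{|\sigma|}\cdot(-1)^{|\sigma||\tau'|} = (-1)^{|\sigma||\tau|}\cdot 1$, i.e.\ $(-1)^{|\sigma||\tau'|} = (-1)^{|\sigma|}(-1)^{|\sigma||\tau|}$, true since $|\tau'| = |\tau|-1$. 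Both hold, so $p$ commutes with the differentials.

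The main obstacle, such as it is, is bookkeeping the Koszul signs consistently — in particular making sure the sign $(-1)^{|x||y|}$ built into the swap is the one forced by the definitions of $-\otimes_{\Z(K)}-$ and $-\otimes_{\Z(K^{\op})}-$ (which evaluate on $M(\sigma)\otimes N(\tau)$ versus $M(\tau)\otimes N(\sigma)$ in opposite orders) rather than an arbitrary choice, and checking that it descends correctly through the subquotient/quotient constructions. I would therefore organize the verification so that the swap sign is read off once from Lemma \ref{rules}(3) (the analogous swap $x\otimes y\otimes z\mapsto(-1)^{|y||z|}x\otimes z\otimes y$) and then propagated mechanically. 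No homological input is needed — this is a purely combinatorial-sign lemma — and it sets up the identification of the two dual-cell pictures ($DK\cong DK^{\op}$ at the chain level) that the chain duality construction in Section \ref{K-based chain duality} will use.
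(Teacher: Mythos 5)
Your proposal is correct and follows essentially the same route as the paper: the underlying graded-group identification is immediate, and the substance is the termwise comparison of $p\,\partial(\wh\sigma\otimes\tau)$ with $\partial\, p(\wh\sigma\otimes\tau)$ using the two boundary formulas already recorded, verifying exactly the two sign identities you state. (One small indexing slip: the summands of $[\Delta K^{-*}\otimes_{\Z(K)}\Delta K](\sigma,\tau)$ are indexed by $\sigma\le\rho\le\rho'\le\tau$, not $\rho\le\sigma\le\tau\le\rho'$ --- consistent with your own parenthetical identification with $C(D_\tau\sigma)$ --- but this does not affect the argument.)
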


\begin{proof}
First note that in this statement we are using the fact that $DK \cong DK^{\op}$ with 
$(\sigma \leq \tau) \leftrightarrow (\tau \geq \sigma)$.

The isomorphism as graded groups is clear. We need to check that the map $p$ commutes with differentials.
\begin{align*}
p  \partial (\wh \sigma \otimes \tau) & = (-1)^{|\sigma'||\tau|} (-1)^{|\sigma|+1} 
 \sum_{\sigma \prec \sigma' \leq \tau} [\sigma':\sigma]  \tau \otimes \wh \sigma' + (-1)^{|\sigma||\tau'|}(-1)^{|\sigma|} \sum_{\sigma \leq \tau' \prec \tau}  [\tau:\tau']     \tau' \otimes \wh \sigma \\
 & = (-1)^{|\sigma||\tau|}     \left(
 \sum_{\sigma \leq \tau' \prec \tau}  [\tau:\tau']     \tau' \otimes \wh \sigma   +     (-1)^{|\tau|} (-1)^{|\sigma|+1} \sum_{\sigma \prec \sigma' \leq \tau} [\sigma':\sigma]  \tau \otimes \wh \sigma' \right) \\
 & = \partial p (\wh \sigma \otimes \tau).
 \end{align*}

\end{proof}

\begin{corollary}   \label{varepsilon is a weak eq op}
The chain map 
$$
\varepsilon: [\Delta K \otimes_{\Z(K^{\op})} \Delta K^{-*}] \to \ul\Z^{DK^{\op}}
$$
is a weak equivalence.
\end{corollary}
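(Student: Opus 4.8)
The plan is to reduce the statement for $[\Delta K \otimes_{\Z(K^{\op})} \Delta K^{-*}]$ to the already-established Corollary \ref{varepsilon is a weak eq} for $[\Delta K^{-*} \otimes_{\Z(K)} \Delta K]$, transporting the weak equivalence across the isomorphism $p$ of Lemma \ref{iso-in-DK}. First I would recall that a weak equivalence of $\Z[DK^{\op}]$-chain complexes is, by definition, a chain map which is a homology isomorphism at every object $(\tau,\sigma) \in DK^{\op}$; and that under the identification $DK \cong DK^{\op}$ sending $(\sigma,\tau)$ to $(\tau,\sigma)$, the object $(\tau,\sigma)$ of $DK^{\op}$ corresponds to the object $(\sigma,\tau)$ of $DK$. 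So it suffices to check the homology isomorphism objectwise, one object of $DK$ at a time.

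Next I would fix $\sigma \leq \tau \in K$ and assemble the commuting square of chain complexes of abelian groups relating the two augmentations: the vertical map on the left is the isomorphism $p(\sigma,\tau) : [\Delta K^{-*} \otimes_{\Z(K)} \Delta K](\sigma,\tau) \to [\Delta K \otimes_{\Z(K^{\op})} \Delta K^{-*}](\tau,\sigma)$ of Lemma \ref{iso-in-DK}; the top map is $\varepsilon(\sigma,\tau)$ onto $\ul\Z^{DK}(\sigma,\tau) = \Z$; the bottom map is $\varepsilon(\tau,\sigma)$ onto $\ul\Z^{DK^{\op}}(\tau,\sigma) = \Z$; and the right vertical map is the identity (or a sign, which I will pin down below). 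The key computational point is that $p$ carries the degree-zero basis element $\wh\sigma \otimes \sigma$ to $(-1)^{|\sigma||\sigma|}\sigma \otimes \wh\sigma = (-1)^{|\sigma|}\sigma \otimes \wh\sigma$, and one checks directly from the definitions $\varepsilon(\wh\sigma \otimes \sigma) = 1$ and $\varepsilon(\sigma \otimes \wh\sigma) = (-1)^{|\sigma|}$ that the square commutes on the nose with the right vertical map the identity. Since $p(\sigma,\tau)$ is an isomorphism of chain complexes and $\varepsilon(\sigma,\tau)$ is a homology isomorphism by Corollary \ref{varepsilon is a weak eq}, the two-out-of-three property forces $\varepsilon(\tau,\sigma)$ to be a homology isomorphism as well.

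Finally, since this holds for every $\sigma \leq \tau$, i.e.\ at every object of $DK^{\op}$, the chain map $\varepsilon : [\Delta K \otimes_{\Z(K^{\op})} \Delta K^{-*}] \to \ul\Z^{DK^{\op}}$ is a weak equivalence, as claimed. I expect the only real subtlety to be bookkeeping of signs: one must verify that the sign $(-1)^{|\sigma|}$ introduced by $p$ on the degree-zero generator is exactly the sign appearing in the definition $\varepsilon(\sigma \otimes \wh\sigma) = (-1)^{|\sigma|}$, so that the comparison square commutes without a stray sign — and more importantly that $p$ intertwines $\varepsilon$ in all degrees, which it does automatically because $\varepsilon$ vanishes on all higher basis elements $\wh\sigma \otimes \tau$ (resp.\ $\tau \otimes \wh\sigma$) with $\sigma$ a proper face of $\tau$ and $p$ permutes these among themselves up to sign. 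No genuinely new geometric input is needed beyond Lemma \ref{dual cells are cells}, which already went into Corollary \ref{varepsilon is a weak eq}.
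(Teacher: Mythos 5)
Your proposal is correct and follows essentially the same route as the paper, which deduces this corollary directly from Lemma \ref{iso-in-DK} and Corollary \ref{varepsilon is a weak eq}; your objectwise comparison square and the sign check $(-1)^{|\sigma||\sigma|} = (-1)^{|\sigma|}$ matching $\varepsilon(\sigma\otimes\wh\sigma)=(-1)^{|\sigma|}$ simply make explicit the details the paper leaves to the reader.
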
  

\begin{proof}
This follows from Lemma \ref{iso-in-DK}  and Corollary \ref{varepsilon is a weak eq}.
\end{proof}

\begin{remark} \label{two-sided}
 Davis and L\"uck's  \cite[ Lemma 3.17]{DL98} gives, for an arbitrary category $\cC$, a free $\cC^{\op} \times \cC$-CW-approximation of the discrete $\cC^{\op} \times \cC$-space $\cC(?,??)$.  This CW-approximation is foundational for homotopy (co)limits, and is essential for a 2-sided bar resolution.   The material in this section shows that there is a much smaller model for this CW-approximation in the case where $K$ is the poset of a finite simplicial complex, namely one can use the $K^{\op} \times K$-space given by
 $$
 (\sigma, \tau) \mapsto 
\begin{cases}
 D_\tau \sigma & \sigma \leq \tau \\
 \emptyset & \text{otherwise}
\end{cases}
 $$
\end{remark}

\section{$K$-based chain duality} \label{K-based chain duality}

Recall our main theorem: $\begin{cases} \ch((\Z(K^{\op})\Mod) \\ \ch((\Z(K)\Mod) \end{cases}$ admits the structure $(T, \tau)$ of a category with  chain duality.   Note that in what follows we will give more details for the results in the $\Z(K^{\op})\Mod$ category.  The reason for this choice is that the $\Z(K^{\op})\Mod$ category is more relevant for applications.

 We now define $T$.

\begin{definition}
 Let $K$ be a finite simplicial complex.  Define
 \begin{enumerate}
 \item  The $T$ functor for  $\ch(\Z(K^{\op})\Mod) $ as
  \begin{align*}
 T = T_K &: \ch(\Z(K^{\op})\Mod)  \to \ch(\Z(K^{\op})\Mod)^{\op} \\
 TC & = (\Delta K \otimes_{\Z(K^{\op}]} [C])^{-*}.
\end{align*}
 \item  The $T$ functor for  $\ch(\Z(K)\Mod) $ as
   \begin{align*}
 T = T_K &: \ch(\Z(K)\Mod)  \to \ch(\Z(K)\Mod)^{\op} \\
 TC & = (\Delta K^{-*} \otimes_{\Z(K]} [C])^{-*}.
\end{align*}
\end{enumerate}
\end{definition}

To give credence to the assertion that this is a form of duality, we pause for the following proposition, which asserts that $TC$ is a $K$-based dual of the result of forgetting the $K$-based structure of $C$.

\begin{proposition} The map
$$
(\varepsilon \otimes \id)^{-*} : (\ul \Z^K \otimes_{\Z[K^{\op}]} [C])^{-*} \to \colim_{K^{\op}} [TC]
$$
is a chain homotopy equivalence.    Thus the dual of the underlying chain complex of $C$ is chain homotopy equivalent to the underlying chain complex of $TC$.
\end{proposition}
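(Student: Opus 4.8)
The plan is to identify both sides with something computed by the augmentation $\varepsilon : [\Delta K] \to \ul\Z^K$ (Example \ref{augmentation}), which we already know is a weak equivalence of $\Z[K^{\op}]$-chain complexes, and then transport this to a chain homotopy equivalence via the results of Section \ref{sec:cat_point_of_view}. First I would unwind the target: $\colim_{K^{\op}}[TC] = \colim_{K^{\op}}[(\Delta K \otimes_{\Z(K^{\op}]}[C])^{-*}]$, and by Lemma \ref{rules}(4) this equals $(\colim_{K^{\op}}[\Delta K \otimes_{\Z(K^{\op}]}[C]])^{-*}$. Then Lemma \ref{rules}(1) gives $\colim_{K^{\op}}[\Delta K \otimes_{\Z(K^{\op}]}[C]] = [\Delta K]\otimes_{\Z[K^{\op}]}[C]$, so the target is $([\Delta K]\otimes_{\Z[K^{\op}]}[C])^{-*}$. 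Similarly the source $(\ul\Z^K \otimes_{\Z[K^{\op}]}[C])^{-*}$ is already in the right form, and under these identifications the map $(\varepsilon\otimes\id)^{-*}$ is literally the $(-*)$-dual of $\varepsilon\otimes\id : [\Delta K]\otimes_{\Z[K^{\op}]}[C] \to \ul\Z^K \otimes_{\Z[K^{\op}]}[C]$ (one should double-check the variance: $-*$ is contravariant, so a map $[\Delta K]\otimes[C] \to \ul\Z^K\otimes[C]$ dualizes to a map from the $\ul\Z^K$-side to the $[\Delta K]$-side, matching the statement).

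Next I would show $\varepsilon\otimes\id : [\Delta K]\otimes_{\Z[K^{\op}]}[C] \to \ul\Z^K\otimes_{\Z[K^{\op}]}[C]$ is a weak equivalence of $\Z[K^{\op}]$-chain complexes. This is exactly the content of Proposition \ref{whe}: we know $\varepsilon$ is a weak equivalence (Example \ref{augmentation}, via Proposition \ref{whe=che}, since $\Delta K \in \ch(\Z(K)\Mod)$, or directly), the objects $[\Delta K], \ul\Z^K$ lie in $\ch(\cF(\Z[K^{\op}]\Mod))$, and $[C]$ comes from $\ch(\Z(K)\Mod)$ — wait, here $C \in \ch(\Z(K^{\op})\Mod)$ so $[C]$ is a $\Z[K^{\op}]$-module; I need the version of Proposition \ref{whe} with the roles of $K$ and $K^{\op}$ interchanged, which holds by the same induction on the length of $C$. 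Granting that, $\varepsilon\otimes\id$ is a weak equivalence. Finally, a weak equivalence of $\Z[K^{\op}]$-chain complexes is in particular a homology isomorphism on each object $\sigma$, hence $\colim_{K^{\op}}$ of it is a homology isomorphism of chain complexes of abelian groups between finitely generated free complexes, so by the usual homological algebra it is a chain homotopy equivalence; and applying $(-*) = \Hom_\Z(-,\Z)$ to a chain homotopy equivalence of finitely generated free $\Z$-complexes yields a chain homotopy equivalence. Alternatively, and more cleanly, I would apply $-*$ to the map of $\Z[K^{\op}]$-complexes first, observe it is still a weak equivalence (duality over $\Z$ preserves homology isomorphisms of f.g. free complexes objectwise), and invoke Proposition \ref{whe=che} after recognizing the dual as coming from $\ch(\Z(K)\Mod)$ — but the cleanest route is just to take $\colim$ and use the elementary fact over $\Z$.

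The main obstacle I anticipate is bookkeeping rather than conceptual: getting the variance and the $K$ versus $K^{\op}$ decorations consistent throughout (the functors $[~]$, $\colim$, $-\otimes_{\Z(K^{\op}]}-$, and $-*$ each shuffle $K$ and $K^{\op}$), and making sure the map named $(\varepsilon\otimes\id)^{-*}$ in the statement is genuinely the composite of the canonical identifications of Lemma \ref{rules} with the dual of $\varepsilon\otimes\id$. A secondary point to verify carefully is that Proposition \ref{whe} is being invoked with hypotheses that actually apply here — namely that $[\Delta K]$ and $\ul\Z^K$ are objectwise f.g. free (true) and that the induction on length goes through with $[C]$ a $\Z[K^{\op}]$-complex arising from $\ch(\Z(K^{\op})\Mod)$ (true, but it is the mirror statement to the one displayed). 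None of these steps should require genuinely new ideas once the translation dictionary of Section \ref{sec:cat_point_of_view} is in place.
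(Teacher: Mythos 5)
Your proposal is correct and follows essentially the same route as the paper: identify the target via the colimit rules, use the augmentation weak equivalence and (the mirrored form of) Proposition \ref{whe} to see that $\varepsilon\otimes\id$ is a weak equivalence, upgrade to a chain homotopy equivalence using that the underlying complexes are finitely generated free over $\Z$, and dualize. Your extra care about the $K$ versus $K^{\op}$ decorations and the mirror statement of Proposition \ref{whe} is exactly the bookkeeping the paper leaves implicit.
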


\begin{proof}
The augmentation
$$
\varepsilon : [\Delta K] \to \ul \Z^K
$$
gives a weak  equivalence, and hence by Proposition \ref{whe}  
$$
\varepsilon \otimes \id : [\Delta K] \otimes_{\Z[K^{\op}]} [C]  \to \ul \Z^K \otimes_{\Z[K^{\op}]} [C]
$$
is a weak  equivalence.  The chains in both the domain and codomain are free abelian groups, so this map is a chain homotopy equivalence.   Thus so is its dual,
$$
(\varepsilon \otimes \id)^{-*} : (\Z^K \otimes_{\Z[K^{\op}]} [C])^{-*}      \to       ([\Delta K] \otimes_{\Z[K^{\op}]} [C] )^{-*}.   
$$

By Lemma \ref{rules} parts (1) and (4),
\begin{align*}  
([\Delta K] \otimes_{\Z[K^{\op}]} [C] )^{-*} & = (\colim_{K}   [  \Delta K \otimes_{\Z(K^{\op}]} [C] ])^{-*} \\
                                                                 & = \colim_{K^{\op}} [( \Delta K \otimes_{\Z(K^{\op}]} [C])^{-*}] \\
                                                                 &=\colim_{K^{\op}} [TC].
\end{align*}
  \end{proof}

\begin{proposition}  \label{cd}
For $C,D \in \begin{cases} \ch(\Z(K^{\op})\Mod) \\ \ch(\Z(K)\Mod)    \end{cases},$ there is a natural isomorphism

$$
\begin{cases}
\tau_{C,D}: \Hom_{\Z(K^{\op}]} (TC,[D])  \to \Hom_{\Z(K^{\op}]}(TD,[C]) \\
\tau_{C,D}: \Hom_{\Z(K]}(TC,[D])  \to \Hom_{\Z(K]}(TD,[C]) 
\end{cases}
$$
satisfying $\tau_{D,C} \circ \tau_{C,D} = \id$.
\end{proposition}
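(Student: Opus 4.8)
The plan is to realize $\tau_{C,D}$ as a composite of three of the canonical natural isomorphisms of Lemma \ref{rules}, organized around a ``symmetric model'' for $\Hom_{\Z(K^{\op}]}(TC,[D])$ in which interchanging $C$ and $D$ is effected by a single involution. Throughout we use the fact that, although Lemma \ref{rules} is stated with the $\Z(K]$-decoration, the same identities hold verbatim with the $\Z(K^{\op}]$-decoration --- this is the $K\leftrightarrow K^{\op}$ symmetry that also gives $DK\cong DK^{\op}$. We treat the $\ch(\Z(K^{\op})\Mod)$ case; the $\ch(\Z(K)\Mod)$ case is identical after replacing $\Delta K$ by $\Delta K^{-*}$ and each $\Z(K^{\op}]$ by $\Z(K]$.

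First, set $P_C=\Delta K\otimes_{\Z(K^{\op}]}[C]$. One checks from the conventions that $P_C\in\ch(\Z(K)\Mod)$, that its underlying abelian groups are finitely generated free (being sums of tensor products of finitely generated free groups), and that $TC=(P_C)^{-*}$. Lemma \ref{rules}(7) in its $\Z(K^{\op}]$-version (whose finite-generation hypothesis is thereby satisfied) then gives a natural isomorphism of $\Z(K)$-chain complexes
$$
u_{C,D}\colon\ \big(\Delta K\otimes_{\Z(K^{\op}]}[C]\big)\otimes_{\Z(K^{\op}]}[D]\ \xrightarrow{\ \cong\ }\ \Hom_{\Z(K^{\op}]}(TC,[D]).
$$
This is the symmetric model. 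Lemma \ref{rules}(3) gives a second natural isomorphism
$$
s_{C,D}\colon\ \big(\Delta K\otimes_{\Z(K^{\op}]}[C]\big)\otimes_{\Z(K^{\op}]}[D]\ \xrightarrow{\ \cong\ }\ \big(\Delta K\otimes_{\Z(K^{\op}]}[D]\big)\otimes_{\Z(K^{\op}]}[C],
$$
which is an involution in the sense that $s_{D,C}\circ s_{C,D}=\id$: applying the formula $x\otimes y\otimes z\mapsto(-1)^{|y||z|}x\otimes z\otimes y$ twice introduces the sign $(-1)^{|y||z|}(-1)^{|z||y|}=1$. One then defines
$$
\tau_{C,D}\ =\ u_{D,C}\circ s_{C,D}\circ u_{C,D}^{-1}\colon\ \Hom_{\Z(K^{\op}]}(TC,[D])\ \longrightarrow\ \Hom_{\Z(K^{\op}]}(TD,[C]).
$$

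Naturality of $\tau_{C,D}$ in both variables is then immediate: $T$ is a functor, and $u$, $s$, and the inverse of $u$ are natural by Lemma \ref{rules}. For the relation $\tau_{D,C}\circ\tau_{C,D}=\id$, one substitutes the definitions, cancels the adjacent pair $u_{D,C}^{-1}\circ u_{D,C}=\id$ occurring in the middle, and is left with $u_{C,D}\circ(s_{D,C}\circ s_{C,D})\circ u_{C,D}^{-1}=u_{C,D}\circ u_{C,D}^{-1}=\id$ by the involutivity of $s$ recorded above.

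The only genuine work is the bookkeeping underlying the two displayed isomorphisms: verifying that every module occurring lies in exactly the category required by the round/square conventions (so that the functors of Lemma \ref{rules} are defined and their finite-generation hypotheses apply), and carrying out the sign computation that makes the Lemma \ref{rules}(3) swap an honest involution. This last point is precisely what forces $\tau^2=\id$ on the nose rather than merely up to chain homotopy, and is the step I expect to demand the most care.
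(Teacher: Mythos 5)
Your proposal is correct and is essentially identical to the paper's proof: the paper also realizes $\tau_{C,D}$ as the composite of the slant-product isomorphism $(\Delta K\otimes[C])\otimes[D]\xrightarrow{\cong}\Hom(TC,[D])$ (your $u_{C,D}$, i.e.\ Lemma~\ref{rules}(7)), the signed swap of Lemma~\ref{rules}(3) (your $s_{C,D}$), and the slant product back to $\Hom(TD,[C])$. Your explicit verification that the swap is an involution (so $\tau_{D,C}\circ\tau_{C,D}=\id$ on the nose) is a detail the paper leaves implicit, but the route is the same.
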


\begin{proof}  We give the proof for $C, D \in \ch(\Z(K^{\op})\Mod)$ as follows:
\begin{align*}
 \Hom_{\Z(K^{\op}]}(TC,[D]) & = \Hom_{\Z(K^{\op}]}((\Delta K \otimes_{\Z(K^{\op}]} [C])^{-*},[D])  \\
 & \xleftarrow{\cong} (\Delta K \otimes_{\Z(K^{\op}]}  [C]) \otimes_{\Z(K^{\op}]}  [D], \textnormal{ by Lemma \ref{rules} part (7)}  \\
& \cong(\Delta K \otimes_{\Z(K^{\op}]}  [D]) \otimes_{\Z(K^{\op}]}  [C] \\
 & \xrightarrow{\cong} \Hom_{\Z(K^{\op}]}((\Delta K \otimes_{\Z(K^{\op}]}  [D])^{-*},[C]) \\
 &= \Hom_{\Z(K^{\op}]}(TD,[C]).
\end{align*} 


%
%
%
%

The proof with $C, D \in \ch(\Z(K)\Mod)$ is similar. The only difference is to use the appropriate definition for $TC$ if $C$ is a $\Z(K)\Mod$ chain complex.

\end{proof}

By taking colimits, and restricting to 0-cycles, we obtain the following corollary.

\begin{corollary} The following is an additive category with a chain duality functor:
$$\begin{cases}  (\ch(\Z(K^{\op})\Mod),T, \tau), \\ (\ch(\Z(K)\Mod),T, \tau).   \end{cases}$$
\end{corollary}

Ranicki's book \cite{bluebook} does prove this corollary.   However, what is missing is a proof that $e_C$ is a chain homotopy equivalence, i.e.~this chain duality functor produces a chain duality.

By Lemma \ref{ec_appears}, there is a natural  $\begin{cases} \Z(K^{\op}) \\  \Z(K)\end{cases}$-module chain map $$e_C = \tau(\id_{TC}) : T^2C \to C.$$    Our next task will be to show that the chain duality functor is a  chain duality, i.e. that $e_C$ is a chain homotopy equivalence.    But first we need a formula for $e_C$.  In the following theorem we work with $\Z(K^{\op})\Mod$ chain complexes  and use the triple tensor product.   The corresponding statement for $\Z(K)\Mod$ is very similar except for some details with the signs involved in the vertical isomorphism. We give the formulas in this case in Theorem \ref{formula-eC-K} below.

To deal with $T^2C$ we have to make use of one more construction, which will allow for a convenient coordinate change.

\begin{definition} Let $K$ be a poset.
Let $C$ be a $\Z(K^{\op})$-chain complex, $D$ be a $\Z(K)$-chain complex, and $E$ be a $\Z[K^{\op}]$-chain complex.
 The {\em triple tensor product} is the $\Z (K^{op})$-chain complex 
$$
C \otimes D \otimes E = \pi_{K*} (C \otimes_{\Z (K)}  D) \otimes_{\Z(K^{\op}]} E
$$
Then
$$
(C \otimes D \otimes E)(\sigma) = \bigoplus_{\sigma \leq \rho} C(\sigma) \otimes_{\Z} D(\rho) \otimes_{\Z} E(\rho)
$$
(see Lemma \ref{round_induction} and Lemma \ref{rules}.1) 
and for $c \otimes d \otimes e \in C_i(\sigma) \otimes D_j(\rho) \otimes E_k(\rho)$, the boundary map is 
\begin{multline*}
\partial ( c \otimes d \otimes e) = \\ \sum_{\sigma \leq \sigma' \leq \rho} \left(\partial^C_{\sigma \leq \sigma'} c  \otimes d \otimes e+   (-1)^i c \otimes \partial^D_{\sigma' \leq \rho} d \otimes E(\sigma' \leq \rho)e \right)   +   (-1)^{i+j} c \otimes d \otimes \partial^E e.
\end{multline*}
\end{definition}

\begin{theorem} \label{triangle}  The following assertions are valid.
\begin{enumerate}
 \item There are an isomorphism  of $\Z[K^{\op}]$-chain complexes 
 
\begin{align*}
 \Psi:  \Delta K^{-*} \otimes \Delta K \otimes [C] & \to T^2C  =  (\Delta K \otimes  [(\Delta K \otimes [C])^{-*}] )^{-*} \\
 \end{align*}
given by 
\begin{multline*}
\Psi_n(\sigma):  \bigoplus_{\sigma \leq \rho \leq \tau}  ( \Delta K^{-*}(\sigma) \otimes \Delta K(\rho) \otimes C(\tau))_n \to ((\Delta K \otimes  [(\Delta K \otimes [C])^{-*}] )^{-*} )_n(\sigma)
\end{multline*}
where
 $$
 \Psi_n(\sigma)(\wh \sigma \otimes \rho \otimes a) =  \left(\sigma \otimes f \mapsto (-1)^{(1+|\sigma|) (n+|\sigma|)}  f(\rho \otimes a)\right).
 $$

 \item

If $\Phi_n(\sigma \leq \sigma')$ is the map defined by the commutative triangle:
$$
\begin{tikzcd}
  \bigoplus_{\sigma \leq \rho \leq \tau} ( \Delta K^{-*}(\sigma) \otimes \Delta K(\rho) \otimes C(\tau))_n  \arrow[rd,"\Phi_n(\sigma \leq \sigma')"] \arrow[dd,"\Psi_n(\sigma)" ']   &\\
 & C_n(\sigma') \\
 ( T^2C)_n(\sigma)\arrow[ru,"(e_C)_n(\sigma \leq \sigma') "'] &
\end{tikzcd}
$$

\medskip

then

\begin{itemize}
 \item $\Phi_n(\sigma \leq \sigma')|_{ \Delta K^{-*}(\sigma) \otimes \Delta K(\rho) \otimes C(\tau)} = 0$ if $\sigma <\rho$.
 \item $\Phi_n(\sigma \leq \sigma')|_{ \Delta K^{-*}(\sigma) \otimes \Delta K(\rho) \otimes C(\tau)} = 0$ if $\tau \neq \sigma '$.
 \item  $\Phi_n(\sigma \leq \sigma')(\wh \sigma \otimes \sigma \otimes a) =   a$, with $\sigma$ a generator of the infinite cyclic group $\Delta K_{|\sigma|}(\sigma)$ and  $\wh \sigma$ the dual generator of $\Delta K^{|\sigma|}(\sigma) $   and $a \in   C_n(\sigma')$.
   \end{itemize}
  
  \end{enumerate}
 
\end{theorem}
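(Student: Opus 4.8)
The proof proceeds by explicitly unwinding the chain of isomorphisms that defines $\tau$, applied to the identity map. The starting observation is that $e_C = \tau(\id_{TC})$, and $\tau$ is the composite
$$
\Hom(TC,[TC]) \xleftarrow{\cong} (\Delta K \otimes [TC]) \otimes [C] \xrightarrow{\cong} (\Delta K \otimes [C]) \otimes [TC] \xrightarrow{\cong} \Hom(TC, [C])
$$
from the proof of Proposition \ref{cd}. So the task is to track $\id_{TC}$ backward through the slant product isomorphism of Lemma \ref{rules}(7) into $(\Delta K \otimes [TC]) \otimes [C]$, apply the swap of Lemma \ref{rules}(3), and then push forward through the slant product again to land in $\Hom(TC,[C])$, which is what $e_C$ is as a chain map. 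Part (1) of the theorem is purely formal bookkeeping: the displayed isomorphism $\Psi_n(\sigma)$ is just the composite of the round-square distributivity/associativity isomorphisms with the identification $(C^{-*})^{-*} \cong C$, and the sign $(-1)^{(1+|\sigma|)(n+|\sigma|)}$ is exactly what one extracts by composing the sign conventions in Lemma \ref{rules}(5)--(7) evaluated at the object $\sigma$ of $DK^{\op}$ (recall $\Delta K^{-*}(\sigma)$ is concentrated in degree $|\sigma|$). I would verify this by writing out each arrow on a basis element $\wh\sigma \otimes \rho \otimes a$ and collecting signs.

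For part (2), I would first explain why $\Phi_n(\sigma \leq \sigma')$ vanishes off the ``diagonal'' summands. The first vanishing ($\sigma < \rho$ forces zero) comes from the fact that $\id_{TC}$, viewed under the first slant product as an element of $(\Delta K \otimes [TC])\otimes [C]$, only involves the generator $\sigma$ of $\Delta K_{|\sigma|}(\sigma)$ (not higher-dimensional simplices $\rho$ with $\sigma < \rho$) — this is where the ``smaller-to-bigger'' structure of $\Delta K^{-*}$ and the definition of the duality pairing conspire. Equivalently, after the swap in Lemma \ref{rules}(3), $e_C$ is built from the pairing $\Delta K^{-*}(\sigma)\otimes\Delta K(\rho)$, which for a diagonal duality basis only sees $\rho = \sigma$. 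The second vanishing ($\tau \neq \sigma'$ forces zero) is simply because $e_C$ is a $\Z(K^{\op})$-module map $T^2C \to C$ and the component landing in $C_n(\sigma')$ can only come from the $C(\tau)$-summand with $\tau = \sigma'$; this is automatic from how $[C]$ decomposes and how morphisms in $\Z(K^{\op})\Mod$ restrict. The final normalization $\Phi_n(\sigma\leq\sigma')(\wh\sigma\otimes\sigma\otimes a) = a$ is the crucial computation: one checks that evaluating the triple slant-product composite on $\wh\sigma\otimes\sigma\otimes a$ produces the identity, with all signs from $\Psi_n$ and from the three isomorphisms cancelling exactly. This sign cancellation is the content of the claim that the triangle commutes with $(e_C)_n$ on the nose.

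\textbf{Main obstacle.} The hard part is the sign bookkeeping, specifically confirming that the sign $(-1)^{(1+|\sigma|)(n+|\sigma|)}$ in $\Psi_n$ is precisely the one that makes $(e_C)_n(\sigma\leq\sigma')\circ\Psi_n(\sigma)$ restrict to the \emph{unsigned} identity $a \mapsto a$ on the diagonal summand. Each of the four isomorphisms in the chain (the two slant products of Lemma \ref{rules}(7), the swap of Lemma \ref{rules}(3), and the double-dual identification Lemma \ref{rules}(6)) carries a sign depending on degrees like $|x||y|$, $|y||z|$, $|x|$; when specialized to $x = \wh\sigma$ (degree $|\sigma|$), $y = \sigma$ (degree $|\sigma|$), $z = a$ (degree $n$, or rather $n - 2|\sigma|$ after accounting for the grading shifts in $TC$ and $T^2C$), these must be tracked through consistently, and it is easy to be off by a sign. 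I would organize this by fixing once and for all the degree of each tensor factor at the object $\sigma$ and computing the total sign as a single quadratic expression in $|\sigma|$ and $n$, then checking it reduces to $(-1)^{(1+|\sigma|)(n+|\sigma|)}$. Everything else — the two vanishing statements and the structure of the commutative triangle — follows formally from the module structure and Yoneda, and presents no real difficulty.
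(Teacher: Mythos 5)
Your proposal follows the paper's proof essentially verbatim: both parts are obtained by explicitly tracing $\id_{TC}$ through the composite of slant products and the swap from the proof of Proposition \ref{cd}, with the signs collected exactly as you outline, and the main labor is indeed the sign bookkeeping in $\Psi_n(\sigma)$. Two small corrections to your plan: in your displayed composite the intermediate objects are transposed and the codomain is mislabeled --- the pullback of $\id_{TC}$ under the first slant product lands in $(\Delta K\otimes[C])\otimes[TC]$, the swap then sends it to $(\Delta K\otimes[TC])\otimes[C]$, and the final target is $\Hom(T^2C,[C])$, not $\Hom(TC,[C])$. Also, the vanishing for $\tau\neq\sigma'$ is not ``automatic from how morphisms restrict'': a general element of $\Hom(T^2C,[C])_0(\sigma)$ can have nonzero components from every summand $\Delta K^{-*}(\sigma)\otimes\Delta K(\rho)\otimes C(\tau)$ into $C(\sigma')$; here it holds, like the $\sigma<\rho$ vanishing, only because the image of $\id_{TC}(\sigma)$ under the first slant product is the diagonal element $\sum_i \pm\,\sigma\otimes e_i\otimes(\wh\sigma\otimes\wh{e_i})$.
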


\begin{proof} 1.    Note that in the following lines $n  + |\sigma| = |a| + |\rho|$. 
We use Lemma \ref{rules} parts 6 and 5,

\begin{align*}
\bigoplus_{\sigma \leq \rho \leq \tau} \left(\Delta K^{-*}_{-|\sigma|}(\sigma) \otimes  \left(
\Delta K(\rho) \otimes C(\tau)\right)_{n+|\sigma|} \right)_n    &    ~  ;  ~   \wh \sigma \otimes \rho \otimes a \\
\xrightarrow{\cong}    \bigoplus_{\sigma \leq \rho \leq \tau} \left(\Delta K^{-*}_{-|\sigma|}(\sigma) \otimes  \left(
(\Delta K(\rho) \otimes C(\tau))^{-*}\right)^{-*}_{n+|\sigma|} \right)_n    &    ~  ;  ~  \wh \sigma \otimes (f \xmapsto{\beta} (-1)^{n+|\sigma|} f(\rho \otimes a)) \\
=   \left(\Delta K^{-*}_{-|\sigma|}(\sigma) \otimes  \left(
[TC](\sigma)\right)^{-*}_{n+|\sigma|} \right)_n    &    ~  ;  ~  \wh \sigma \otimes \beta \\
\xrightarrow{\cong}    \left(\Delta K_{|\sigma|}(\sigma) \otimes  
[TC](\sigma)_{-n-|\sigma|} \right)^{-*}_n    &    ~  ;  ~ \sigma \otimes f \mapsto  (-1)^{|\sigma| (n+|\sigma|)}  \beta(f) \\
 = (T^2C)_n(\sigma)&    ~   ;  ~  (-1)^{|\sigma| (n+|\sigma|)} (-1)^{n+|\sigma|}f(\rho \otimes a) \\
                           &    ~    ~   = (-1)^{(1+|\sigma|) (n+|\sigma|)}  f(\rho \otimes a) .\\
\end{align*}

\noindent 2. Recall, from the proof of Proposition \ref{cd} that there is an isomorphism $\tau$ 
\begin{align*} 
\Hom(TC,[TC])  &  \xleftarrow{\cong} (\Delta K \otimes [C]) \otimes [TC]\\
 &\cong (\Delta K \otimes [TC]) \otimes [C]  \\
  &  \xrightarrow{\cong} \Hom(T^2C,[C]).  
\end{align*}
and that $e_C$ is represented by the image of $\id_{TC}$ under $\tau$ (after taking colimits and 0-cycles).

However, to incorporate the triple tensor product, we factor the last isomorphism as follows:
\begin{align*} 
    (\Delta K \otimes [TC]) \otimes [C] 
  &  \cong(\Delta K^{-*} \otimes \Delta K \otimes [C])^{-*} \otimes [C] \\
      & \xrightarrow{\cong} \Hom(((\Delta K^{-*} \otimes \Delta K \otimes [C])^{-*})^{-*}, [C])\\
    & \xrightarrow{\cong} \Hom(\Delta K^{-*} \otimes \Delta K \otimes [C], [C])\\
  &  \xleftarrow{\cong} \Hom(T^2C,[C]).  
\end{align*}
where the first isomorphism is induced by 
$$( \Psi^{-1})^{-*} : (\Delta K^{-*}  \otimes \Delta K  \otimes [C])^{-*}  \to \Delta K \otimes [TC] = \Delta K \otimes  [(\Delta K \otimes [C])^{-*}] $$
 and the last isomorphism is induced by $\Psi$.
Since the last isomorphism is induced by $\Psi$, it suffices to trace through the above isomorphisms and show that the image of $\id_{TC}$ in $\Hom(\Delta K^{-*} \otimes \Delta K \otimes [C], [C])$ is represented by $\Phi$.

\medskip

First note that 
\begin{align*}
 \Hom(TC,[TC])_0(\sigma)  & = \bigoplus_{\sigma \leq \rho} \Hom(TC(\sigma), TC(\rho))_0 \\
 &\xrightarrow{\cong} \bigoplus_{\substack{ \sigma \leq \sigma ' \\
 \sigma \leq \rho \leq \tau }} \Hom(\Delta K^{-*}(\sigma) \otimes C^{-*}(\sigma '), \Delta K^{-*}(\rho) \otimes C^{-*}(\tau))_0  
 \end{align*}
The components of the image of $\id_{TC}(\sigma)$ are  identity maps when $(\sigma, \sigma ')  = (\rho, \tau)$ and are zero otherwise.  
That is,
\begin{align*}
\bigoplus_{\sigma \leq \rho} \Hom(TC(\sigma), TC(\rho))_0  & \xrightarrow{\cong} \bigoplus_{\substack{ \sigma \leq \sigma ' \\
 \sigma \leq \rho \leq \tau }}  \Hom(\Delta K^{-*}(\sigma) \otimes C^{-*}(\sigma '), \Delta K^{-*}(\rho) \otimes C^{-*}(\tau))_0 \\
  \id_{TC}(\sigma) &\mapsto \left( (\id:  \widehat{\sigma} \otimes \widehat{a} \mapsto  \widehat{\sigma} \otimes \widehat{a})_{(\sigma, \sigma ')  = (\rho, \tau)}, (0)_{(\sigma, \sigma ')  \not = (\rho, \tau)}\right) \\
\end{align*}

In the next steps, we will need the following result.  For an abelian group $A$, there is the evaluation homomorphism 
\begin{align*}
 \ev: A^* \otimes A & \to \Hom_\Z (A,A) \\
 f \otimes y & \mapsto (x \mapsto f(x)y)
\end{align*}
This is an isomorphism if $A$ is finitely generated free, and if $\{e_i\}$ is a basis for $A$ with dual basis $\{\wh e_i\}$, then $\sum_i \ev(\wh e_i \otimes e_i) = \id_A$.   

If we pass to a chain complex of abelian groups, the evaluation homomorphism is the same up to signs
\begin{align*}
 \ev: C^{-*}\otimes C  & \to \Hom (C,C) \\
 f \otimes y & \mapsto (x \mapsto (-1)^{|x||y|} f(x)y).
\end{align*}

Using these facts, and writing $a \in C(\sigma')$, and $\{ e_i \}$ as a basis for $C(\sigma')$,
 \begin{align*}
& \Hom(\Delta K^{-*}(\sigma) \otimes C^{-*}(\sigma '), \Delta K^{-*}(\sigma) \otimes C^{-*}(\sigma'))_0  ~;~ (\id:  \widehat{\sigma} \otimes \widehat{a} \mapsto  \widehat{\sigma} \otimes \widehat{a})\\
 \xleftarrow{\cong}  
 &  ( \Delta K (\sigma) \otimes C(\sigma') \otimes \Delta K^{-*}(\sigma) \otimes C^{-*}(\sigma') )_0 ~;~  \sum_i (-1)^{|\sigma| (|\sigma| + |e_i|)} \sigma \otimes e_i \otimes (\widehat{\sigma} \otimes \widehat{e_i}) \\
 \xrightarrow{\sw}  
 &    (\Delta K (\sigma)  \otimes \Delta K^{-*}(\sigma) \otimes C^{-*}(\sigma') \otimes C(\sigma'))_0 ~;~ \sum_i (-1)^{(|\sigma|-|e_i|) (|\sigma| + |e_i|)} \sigma \otimes (\widehat{\sigma} \otimes \widehat{e_i})  \otimes e_i \\
 \xrightarrow{\cong}
 &   \Hom(  \Delta K^{-*} (\sigma)  \otimes \Delta K(\sigma) \otimes C(\sigma'), C(\sigma'))_0 ~;~ ( \Phi: \widehat{\sigma} \otimes \sigma \otimes a  \mapsto a ).
 \end{align*} 
 
 Note that the signs occurring at each step in these isomorphisms come from a careful application of the signs in Lemma \ref{rules}.    
 
\end{proof}

The analogous statement of Theorem \ref{triangle} for the $\Z(K)\Mod$ category is as follows.

\begin{theorem} \label{formula-eC-K} The following assertions are valid.
\begin{enumerate}

 \item There is an isomorphism  of $\Z(K)$-chain complexes $$\Psi:  \Delta K \otimes \Delta K^{-*} \otimes [C] \to T^2C =  (\Delta K^{-*} \otimes  [(\Delta K^{-*} \otimes [C])^{-*}] )^{-*} $$
given by 
\begin{multline*}
\Psi_n(\tau):  \bigoplus_{\sigma \leq \rho \leq \tau}  ( \Delta K(\tau) \otimes \Delta K^{-*}(\rho) \otimes C(\sigma))_n \to ((\Delta K^{-*} \otimes  [(\Delta K^{-*} \otimes [C])^{-*}] )^{-*} )_n(\tau)
\end{multline*}
where
 $$
 \Psi_n(\tau)(\tau \otimes \wh  \rho \otimes a) =  \left(\tau \otimes f \mapsto (-1)^{(1+|\tau|) (n+|\tau|)}  f(\wh \rho \otimes a)\right).
 $$

 \item
If $\Phi_n(\tau' \leq \tau)$ is the map defined by the commutative triangle:
$$
\begin{tikzcd}
  \bigoplus_{\sigma \leq \rho \leq \tau} (  \Delta K(\tau) \otimes \Delta K^{-*}(\rho) \otimes C(\sigma))_n  \arrow[rd,"\Phi_n(\tau' \leq \tau)"] \arrow[dd,"\Psi_n(\tau)" ']   &\\
 & C_n(\tau') \\
 ( T^2C)_n(\tau)\arrow[ru,"(e_C)_n(\tau' \leq \tau) "'] &
\end{tikzcd}
$$

\medskip

then

\begin{itemize}
 \item $\Phi_n(\tau' \leq \tau)|_{  \Delta K(\tau) \otimes \Delta K^{-*}(\rho) \otimes C(\sigma)} = 0$ if $\tau <\rho$.
 \item $\Phi_n(\tau' \leq \tau)|_{ \Delta K(\tau) \otimes \Delta K^{-*}(\rho) \otimes C(\sigma)} = 0$ if $\sigma \neq \tau'$.
 \item  $\Phi_n(\tau' \leq \tau)(\tau  \otimes   \wh \tau  \otimes   a) =  (-1)^{| \tau|} a$, with $\tau$ a generator of the infinite cyclic group $\Delta K_{|\tau|}(\tau)$ and  $\wh \tau$ the dual generator of $\Delta K^{|\tau|}(\tau )$   and $a \in   C_n(\tau')$.
   \end{itemize}
  
  \end{enumerate}

\end{theorem}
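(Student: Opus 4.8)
The plan is to mirror the proof of Theorem \ref{triangle}, tracking the sign changes that arise because $T$ for $\ch(\Z(K)\Mod)$ uses $\Delta K^{-*}$ in place of $\Delta K$ in the first slot of the round-square tensor product. For part (1), I would first write out the isomorphism
$$
\Delta K \otimes [(\Delta K^{-*} \otimes [C])^{-*}]^{-*} \xrightarrow{\cong} (\Delta K^{-*} \otimes [TC])^{-*} = T^2(C)
$$
as a composite of the canonical maps from Lemma \ref{rules}: first the double-dual identification (6), then the swap isomorphism (3) applied inside, and finally the slant-product identification (7) or (8) as appropriate. Evaluating at an object $\tau \in K$, the source becomes $\bigoplus_{\sigma \leq \rho \leq \tau}(\Delta K(\tau) \otimes \Delta K^{-*}(\rho) \otimes C(\sigma))_n$ because $[TC](\tau) = \bigoplus_{\sigma \leq \rho \leq \tau} \Delta K^{-*}(\rho) \otimes C^{-*}(\sigma)$ restricted appropriately and $\Delta K(\tau)$ contributes only when $|\tau|$ matches the grading. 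I would then chase a basis element $\tau \otimes \wh\rho \otimes a$ through each arrow, recording the sign contributed at each stage exactly as in the display at the end of the proof of Theorem \ref{triangle}: one factor $(-1)^{n-|\tau|}$ from the inner double-dual, a factor $(-1)^{|\tau|(n-|\tau|)}$ from the slant product in the outer slot, and any cross-terms from the swap. Collecting these should yield the stated sign $(-1)^{(1+|\tau|)(n-|\tau|)}$; the bookkeeping here is where one must be most careful, since the degree conventions $(C^{-*})_n = C^{-n}$ interact with the $|x||y|$ signs in Lemma \ref{rules}(3),(5),(7),(8).

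For part (2), I would define $e_C = \tau(\id_{TC})$ and trace $\id_{TC}(\sigma)$ through the isomorphisms of Proposition \ref{cd}, exactly paralleling the summary display in the proof of Theorem \ref{triangle}, but now with the $\Z(K)\Mod$ version of $T$. First decompose $\Hom(TC,[TC])_0(\tau)$ as a direct sum over pairs of chains and identify the image of $\id_{TC}(\tau)$ as the collection of identity maps on the diagonal summands $(\sigma,\tau') = (\rho,\tau)$ and zeros off-diagonal. Then apply the evaluation/coevaluation identities $\sum_i \ev(\wh e_i \otimes e_i) = \id$ (up to the chain-level signs recalled in the proof of Theorem \ref{triangle}) together with the swap isomorphism to rewrite this as a map $\Phi$; the support conditions ($\Phi = 0$ unless $\rho = \tau$ in the middle and unless $\sigma = \tau'$) follow immediately from which summands the identity maps live on, just as before. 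Finally, compose with the inverse of $\Psi_n(\tau)$ from part (1) to land in $\Hom(T^2C,[C])_0(\tau)$ and read off $(e_C)_n(\tau' \leq \tau)$. The new feature is the extra sign $(-1)^{|\tau|}$ in $\Phi_n(\tau' \leq \tau)(\tau \otimes \wh\tau \otimes a) = (-1)^{|\tau|}a$; I expect this to emerge from the coevaluation for the $\Delta K(\tau) \otimes \Delta K^{-*}(\tau)$ pair, since passing a generator past its dual in the round tensor product over $\Z(K)$ (as opposed to $\Z(K^{\op})$) reverses the role of which factor carries the dual, introducing precisely a $(-1)^{|\tau|}$ via Lemma \ref{rules}(7) versus (8).

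The main obstacle will be the sign bookkeeping: every isomorphism in Lemma \ref{rules} carries a quadratic sign in the degrees, and in the $\Z(K)\Mod$ case these combine differently than in the $\Z(K^{\op})\Mod$ case because the roles of $\sigma$ and $\tau$ (source and target in $DK$) are interchanged and because $\Delta K^{-*}$ appears where $\Delta K$ did. I would handle this by doing the computation one arrow at a time, at each step writing the image of a homogeneous basis element together with its accumulated sign, exactly as in the two long aligned displays in the proof of Theorem \ref{triangle}, and only collecting exponents mod 2 at the very end using the grading relation $n - |\tau| = |a| - |\rho|$ (note the sign difference from the $\Z(K^{\op})$ relation $n + |\sigma| = |a| + |\rho|$, which reflects that $\Delta K^{-*}$ is now in the first slot). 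Everything else is formal and follows the template already established.
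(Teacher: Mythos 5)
Your proposal is correct and follows exactly the route the paper takes: the paper's entire proof of this theorem is the sentence "The proof is similar to that of Theorem \ref{triangle}," and your plan — rerunning the basis-element chase and the trace of $\id_{TC}$ through the isomorphisms of Proposition \ref{cd}, with the grading relation $n - |\tau| = |a| - |\rho|$ replacing $n + |\sigma| = |a| + |\rho|$ and the extra $(-1)^{|\tau|}$ emerging from the coevaluation — is precisely the intended adaptation. No gaps.
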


\begin{proof}  
The proof is similar to that of Theorem \ref{triangle}.
\end{proof}

A $\Z[DK]$-module $M(\sigma \leq \tau)$ extends to a $\Z[K^{\op} \times K]$-module as follows:
$$
M(\sigma,\tau) = 
\begin{cases}
M(\sigma\leq\tau) & \text{ if } \sigma \leq \tau\\
0  & \text{ if } \sigma \not \leq \tau\\
\end{cases}
$$

\begin{corollary}  \label{cdvi}
If $C \in \ch(\Z(K^{\op})\Mod)$ and $\sigma \in K$, there is a commutative square with vertical isomorphisms
$$
\begin{tikzcd}
 {[}\Delta K^{-*} \otimes_{\Z(K)} \Delta K{]}(\sigma,-)  \otimes_{\Z[K^{\op}]} [C] \arrow[r,"\varepsilon \otimes \id"] \arrow[d,"{[}\Psi{]}(\sigma)"] & \ul \Z^{DK}(\sigma, -) \otimes_{\Z[K^{\op}]} [C] \arrow[d,"\YL"] \\
 {[}T^2C{]}(\sigma) \arrow[r,"{[}e_C{]}(\sigma)"]  & {[}C{]}(\sigma)
\end{tikzcd}
$$
\end{corollary}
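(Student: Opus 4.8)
The plan is to exhibit the two vertical isomorphisms explicitly, to express both the top and the bottom horizontal maps in the resulting common description, and to observe that they agree; almost all of the work will go into checking that the left-hand vertical map is a chain map.

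First I would treat the right-hand vertical. Unravelling the definitions, $\ul\Z^{DK}(\sigma,-)$ is the $\Z[K]$-chain complex which is a copy of $\Z$ in degree $0$ at each $\tau$ with $\sigma\leq\tau$, all of whose structure maps are the identity, and which vanishes at the remaining objects; that is, it is the representable $\Z[K(\sigma,-)]$, the free $\Z[K]$-module on one generator at $\sigma$. Hence, after interchanging the roles of $K$ and $K^{\op}$ where needed, $\ul\Z^{DK}(\sigma,-)\otimes_{\Z[K^{\op}]}[C]\cong\Z[K(\sigma,-)]\otimes_{\Z[K^{\op}]}[C]\cong[C](\sigma)$, where the last step is an instance of the Yoneda and $\Hom$--$\otimes$ adjunction isomorphisms recorded in Section~\ref{sec:cat_point_of_view}; naturality makes it an isomorphism of chain complexes, and this is the right-hand vertical map.

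For the left-hand vertical I would first note that $[\Delta K^{-*}\otimes_{\Z(K)}\Delta K](\sigma,-)$ is a free $\Z[K]$-chain complex: since $[M]$ is $\Z[DK]$-free on a basis coming from bases of the abelian groups $M(\rho,\rho')$ when $M\in\Z(DK)\Mod$, a $\Z[K]$-basis here is $\{\wh\rho\otimes\rho'\mid\sigma\leq\rho\leq\rho'\}$, with $\wh\rho\otimes\rho'$ placed at the object $\rho'$ in degree $|\rho'|-|\rho|$; concretely it is $\bigoplus_{\sigma\leq\rho\leq\rho'}\Z[K(\rho',-)]$ up to these shifts, with differential induced from the round tensor product $\Delta K^{-*}\otimes_{\Z(K)}\Delta K$. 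Applying $-\otimes_{\Z[K^{\op}]}[C]$ and the Yoneda isomorphism summand by summand then identifies the top-left corner, as a graded abelian group, with $\bigoplus_{\sigma\leq\rho\leq\rho'\leq\tau}\Delta K^{-*}(\rho)\otimes\Delta K(\rho')\otimes C(\tau)$. On the other side, $[T^2C](\sigma)=\bigoplus_{\sigma\leq\rho}T^2C(\rho)$, and Theorem~\ref{triangle}(1) identifies each $T^2C(\rho)$ with $\bigoplus_{\rho\leq\rho'\leq\tau}\Delta K^{-*}(\rho)\otimes\Delta K(\rho')\otimes C(\tau)$ via $\Psi(\rho)$, giving the same graded abelian group. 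The left-hand vertical will be the composite of these identifications; the real content — and what I expect to be the main obstacle — is checking that it commutes with differentials, since on the target the differential comes from the $T^2$-construction together with the dualities of Lemma~\ref{rules}, while on the source it comes from the differential of $\Delta K^{-*}\otimes_{\Z(K)}\Delta K$, so that matching them is an exercise in reconciling the sign conventions of Lemma~\ref{rules} with the explicit maps used in the proof of Theorem~\ref{triangle}.

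Finally I would check that the square commutes. Under the identification of the top-left corner above, Theorem~\ref{triangle}(2) pins down $[e_C](\sigma)$ completely: its component from $T^2C(\rho)$ into the summand $C(\tau)\subset[C](\sigma)$ is $(e_C)(\rho\leq\tau)$, which composed with $\Psi(\rho)$ is $\Phi(\rho\leq\tau)$, and $\Phi(\rho\leq\tau)$ annihilates every summand $\Delta K^{-*}(\rho)\otimes\Delta K(\rho')\otimes C(\tau')$ with $\rho<\rho'$ or $\tau'\neq\tau$ and sends $\wh\rho\otimes\rho\otimes a$ to $a$. So $[e_C](\sigma)$ is the "collapse" map carrying $\wh\rho\otimes\rho'\otimes a$, with $a\in C(\tau)$, to $a\in C(\tau)\subset[C](\sigma)$ when $\rho=\rho'$ and to $0$ when $\rho$ is a proper face of $\rho'$. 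This is exactly what $\varepsilon\otimes\id$ becomes after the right-hand identification, since by definition $\varepsilon(\wh\rho\otimes\rho')=1$ for $\rho=\rho'$ and $=0$ when $\rho$ is a proper face of $\rho'$, so that $\varepsilon(\sigma,-)\otimes\id$ followed by $\ul\Z^{DK}(\sigma,-)\otimes_{\Z[K^{\op}]}[C]\xrightarrow{\cong}[C](\sigma)$ performs the same operation. Hence the square commutes — and, combined with Corollary~\ref{varepsilon is a weak eq} and Proposition~\ref{whe} (with $K$ and $K^{\op}$ interchanged), this is precisely what is needed to conclude that $[e_C](\sigma)$, and therefore $e_C$, is a chain homotopy equivalence.
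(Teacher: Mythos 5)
Your proposal is correct and follows essentially the same route as the paper: identify the right-hand corner via $\ul\Z^{DK}(\sigma,-)=\Z[K(\sigma,-)]$ and Yoneda's Lemma, obtain the left-hand vertical as $\bigoplus_{\sigma\leq x}\Psi_n(x)$ after decomposing the top-left corner as $\bigoplus_{\sigma\leq x\leq y\leq\tau}\Delta K^{-*}(x)\otimes\Delta K(y)\otimes C(\tau)$, and read off commutativity from the vanishing and normalization properties of $\Phi$ in Theorem \ref{triangle}(2). Your explicit matching of the resulting collapse map with $\varepsilon\otimes\id$ is precisely what the paper compresses into the phrase that the square ``is essentially the commutative triangle from Theorem \ref{triangle}.''
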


\begin{proof}
  The right hand vertical map is given by Yoneda's Lemma.    The left hand vertical isomorphism is defined to be $[\Psi](\sigma)
  = \bigoplus_{\sigma \leq x} \Psi(x)$ from Theorem \ref{triangle}, after we identify

\begin{align*}
 {[}\Delta K^{-*} \otimes_{\Z(K)} \Delta K{]}
(\sigma, -) 
 \otimes_{\Z[K^{\op}]} [C] & = \bigoplus_{\sigma \leq \tau} {[}\Delta K^{-*} \otimes_{\Z(K)} \Delta K{]}(\sigma \leq \tau)  \otimes_{\Z} C(\tau)    \\
 & =  \bigoplus_{\sigma \leq x \leq y \leq \tau} (\Delta K^{-*}(x) \otimes_{\Z} \Delta K(y)  \otimes_{\Z} C(\tau))\\
 & =  [\Delta K^{-*} \otimes \Delta K  \otimes C] (\sigma)
\end{align*} 
We  view the square as a triangle with a vertex at the upper left  and two vertices on the bottom.   Then the triangle (and hence the square commutes) since it results from applying $[ ~ ]$ to the commutative triangle from Theorem \ref{triangle}.

\end{proof}

The analogous statement of Corollary \ref{cdvi} for the $\Z(K)\Mod$ category is as follows.

\begin{corollary}  \label{cdvi op}
Let $C \in \ch(\Z(K)\Mod)$ and $\tau \in K$, there is a commutative square with vertical isomorphisms
$$
\begin{tikzcd}
 {[}\Delta K \otimes_{\Z(K^{\op})} \Delta K^{-*}{]}(\tau, -
)  \otimes_{\Z[K]} [C] \arrow[r,"\varepsilon \otimes \id"] \arrow[d,"{[}\Psi{]}(\tau)"] & \ul \Z^{DK^{\op}}(\tau,-) \otimes_{\Z[K]} [C] \arrow[d,"\YL"] \\
 {[}T^2C{]}(\tau) \arrow[r,"{[}e_C{]}(
\tau)"]  & {[}C{]}(\tau)
\end{tikzcd}
$$
\end{corollary}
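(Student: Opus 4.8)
The plan is to run the proof of Corollary~\ref{cdvi} essentially verbatim, interchanging the roles of $K$ and $K^{\op}$ throughout and using Theorem~\ref{formula-eC-K} in place of Theorem~\ref{triangle}; I will only indicate the points at which the bookkeeping changes.

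First I would produce the right-hand vertical isomorphism. As in the $\Z(K^{\op})$ case, the idea is to identify $\ul\Z^{DK^{\op}}(\tau,-)$ with the representable $\Z[K^{\op}]$-module $\Z[K^{\op}(\tau,-)]$; here one invokes the isomorphism $DK\cong DK^{\op}$, $(\sigma,\rho)\leftrightarrow(\rho,\sigma)$. Yoneda's Lemma then gives $\ul\Z^{DK^{\op}}(\tau,-)\otimes_{\Z[K]}[C]\cong[C](\tau)$, which serves as the right-hand vertical map.

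Next I would produce the left-hand vertical isomorphism. Unwinding the round tensor product over $\Z(K^{\op})$ and then the embedding $[~]$, exactly as in the proof of Corollary~\ref{cdvi}, the domain $\big([\Delta K\otimes_{\Z(K^{\op})}\Delta K^{-*}](\tau,-)\otimes_{\Z[K]}[C]\big)_n$ should decompose as a direct sum, over chains $\sigma\leq\rho\leq\tau'\leq\tau$ (the outer $\tau$ being fixed, the factor $C(\sigma)$ coming from $\otimes_{\Z[K]}[C]$), of the groups $\big(\Delta K(\tau')\otimes_\Z\Delta K^{-*}(\rho)\otimes_\Z C(\sigma)\big)_n$; the isomorphism onto $[T^2C]_n(\tau)=\bigoplus_{\tau'\leq\tau}T^2C_n(\tau')$ is then $\bigoplus_{\tau'\leq\tau}\Psi_n(\tau')$, with $\Psi_n$ the isomorphism of Theorem~\ref{formula-eC-K}.

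With both vertical maps in place, the commutativity of the square becomes exactly the commutative triangle of Theorem~\ref{formula-eC-K}, assembled over $\tau'\leq\tau$: along the top edge $\varepsilon\otimes\id$ kills the summand indexed by $\sigma\leq\rho\leq\tau'\leq\tau$ unless $\rho=\tau'$, and then records only the value of its $\Delta K\otimes\Delta K^{-*}$ part on $\tau'\otimes\wh{\tau'}$, which matches the three bullets describing $\Phi_n$. The step I expect to demand genuine care is the sign bookkeeping: Theorem~\ref{formula-eC-K} carries an extra $(-1)^{|\tau|}$ relative to Theorem~\ref{triangle}, visible in both the formula for $\Psi_n$ and the third bullet, while the definition $\varepsilon(\sigma\otimes\wh\sigma)=(-1)^{|\sigma|}$ carries the matching sign; one must check that these cancel so that, under the Yoneda identification, $\varepsilon\otimes\id$ agrees with $[e_C](\tau)$ precomposed with the left vertical isomorphism. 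Granting Theorem~\ref{formula-eC-K}, this is routine, so I do not anticipate any real obstacle beyond careful sign-chasing.
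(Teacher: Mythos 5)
Your proposal is correct and is exactly the argument the paper intends: its own proof of this corollary is the one-line remark that it is ``similar to the one for Corollary \ref{cdvi}'', and you have carried out that transposition faithfully — the decomposition of the domain over chains $\sigma\leq\rho\leq\tau'\leq\tau$, the left vertical map $\bigoplus_{\tau'\leq\tau}\Psi_n(\tau')$ from Theorem \ref{formula-eC-K}, the Yoneda identification $\ul\Z^{DK^{\op}}(\tau,-)=\Z[K^{\op}(\tau,-)]$ on the right, and the observation that the sign $(-1)^{|\tau'|}$ in $\varepsilon(\tau'\otimes\wh{\tau'})$ matches the one in the third bullet of Theorem \ref{formula-eC-K} are all as required.
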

\begin{proof} The proof is similar to the one for Corollary \ref{cdvi}.
\end{proof}

\begin{corollary}
The chain map $e_C$ is a chain homotopy equivalence.
\end{corollary}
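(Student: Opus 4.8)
The plan is to reduce the statement to homological facts that have already been established: by Proposition~\ref{whe=che} it suffices to see that $[e_C]$ is a weak equivalence of $\Z[K^{\op}]$-chain complexes, and by Corollary~\ref{cdvi} this in turn reduces, objectwise in $\sigma\in K$, to the assertion that a tensored augmentation is a weak equivalence — which is exactly what Corollary~\ref{varepsilon is a weak eq} together with Proposition~\ref{whe} provide.

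In detail, for $C\in\ch(\Z(K^{\op})\Mod)$ I would argue as follows. By the $\Z(K^{\op})\Mod$-version of Proposition~\ref{whe=che} (whose proof is the same, as $[T^2C]$ and $[C]$ are free $\Z[K^{\op}]$-modules), the morphism $e_C:T^2C\to C$ of $\ch(\Z(K^{\op})\Mod)$ is a chain homotopy equivalence as soon as $[e_C]:[T^2C]\to[C]$ is a weak equivalence, i.e.\ as soon as $[e_C](\sigma)$ induces an isomorphism on homology for every $\sigma\in K$. Fix such a $\sigma$. Corollary~\ref{cdvi} exhibits $[e_C](\sigma)$ as the bottom arrow of a commuting square whose two vertical arrows are isomorphisms and whose top arrow is $\varepsilon\otimes\id_{[C]}$, the tensor product over $\Z[K^{\op}]$ of $[C]$ with the $\sigma$-slice $\varepsilon(\sigma,-):[\Delta K^{-*}\otimes_{\Z(K)}\Delta K](\sigma,-)\to\ul\Z^{DK}(\sigma,-)=\Z[K(\sigma,-)]$. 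Hence $[e_C](\sigma)$ is a homology isomorphism if and only if $\varepsilon\otimes\id_{[C]}$ is.

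Now $\varepsilon(\sigma,-)$ is a chain map of $\Z[K]$-modules whose underlying graded abelian groups are finitely generated free, so it lies in $\ch(\cF(\Z[K]\Mod))$; and it is a weak equivalence, because at each $\tau$ with $\sigma\leq\tau$ it is the map $C(D_\tau\sigma)\to\Z$, which is a homology isomorphism by Corollary~\ref{varepsilon is a weak eq} (equivalently, since $D_\tau\sigma$ is a disk, Lemma~\ref{dual cells are cells}), while at all other $\tau$ both sides vanish. Applying Proposition~\ref{whe} with the roles of $K$ and $K^{\op}$ interchanged and $E=C$, we conclude that $\varepsilon(\sigma,-)\otimes_{\Z[K^{\op}]}\id_{[C]}$ is a weak equivalence of abelian-group chain complexes, i.e.\ a homology isomorphism. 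This proves the claim for $\ch(\Z(K^{\op})\Mod)$.

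The case $C\in\ch(\Z(K)\Mod)$ is handled in exactly the same way, replacing Corollary~\ref{cdvi} and Corollary~\ref{varepsilon is a weak eq} by their $\Z(K)\Mod$-analogs (the corollary following Corollary~\ref{cdvi}, and the corollary asserting that $\varepsilon:[\Delta K\otimes_{\Z(K^{\op})}\Delta K^{-*}]\to\ul\Z^{DK^{\op}}$ is a weak equivalence) and using Theorem~\ref{formula-eC-K} in place of Theorem~\ref{triangle}. No genuine obstacle remains at this point: the substantive inputs are Corollary~\ref{cdvi}, which converts the abstractly defined $e_C=\tau(\id_{TC})$ into the explicit augmentation $\varepsilon\otimes\id$, and Corollary~\ref{varepsilon is a weak eq}, whose proof rests on the dual-cell geometry of Section~\ref{sec:dual_cell}; the only thing to keep track of above is the variance bookkeeping for the slice $\varepsilon(\sigma,-)$ and the corresponding form of Proposition~\ref{whe}.
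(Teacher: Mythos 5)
Your argument is correct and follows the paper's own proof essentially verbatim: reduce via Proposition \ref{whe=che} to showing $[e_C]$ is a weak equivalence, use Corollary \ref{cdvi} to identify $[e_C](\sigma)$ with $\varepsilon(\sigma,-)\otimes\id_{[C]}$, and conclude from Corollary \ref{varepsilon is a weak eq} together with Proposition \ref{whe} (with $K$ and $K^{\op}$ interchanged). The only difference is that you spell out the variance bookkeeping more explicitly than the paper does, which is harmless.
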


\begin{proof}
Corollaries \ref{varepsilon is a weak eq} and \ref{varepsilon is a weak eq op} imply that, for all $\begin{cases} \sigma \\ \tau\end{cases}$,

$$
\begin{cases} [\Delta K^{-*} \otimes_{\Z(K)} \Delta K{]}
(\sigma, -)  \to \ul \Z^{K} \\  [\Delta K \otimes_{\Z(K^{\op})} \Delta K^{-*}{]}
(\tau, -)  \to \ul \Z^{K^{\op}}
\end{cases}
$$ 
is a weak equivalence of $\begin{cases} \Z[K] \\  \Z[K^{\op}]\end{cases}$-modules.  Thus by  Corollaries
\ref{cdvi} and \ref{cdvi op} and Proposition \ref{whe}, $[e_C](\sigma)$ and  $[e_C](\tau)$ are a weak equivalences.  Then by Proposition \ref{whe=che}, $e_C$ is a chain homotopy equivalence.
\end{proof}

This completes the proof of our  Theorem \ref{main}.

\subsection*{Acknowledgements}Both authors wish to thank the Max Planck Institute in Bonn for its hospitality during the time in which much of this research was carried out.   They also wish to thank Frank Connolly and Tibor Macko for helpful correspondence and the two anonymous referees for their extensive comments on the paper. 

\subsection*{Funding}
The first named author wishes to acknowledge support from the National Science Foundation DMS 1615056 and the Simons Foundation Collaboration Grant 713226.

The second named author wishes to acknowledge the support by the Deutsche Forschungsgemeinschaft (DFG, German Research Foundation) under Germany's Excellence Strategy EXC2181/1-390900948 (the Heidelberg STRUCTURES Excellence Cluster), the Deutsche Forschungsgemeinschaft 281869850 (RTG 2229).

\bibliography{refs}
\bibliographystyle{alpha}

\Addresses

\end{document}